\documentclass[reqno]{amsart}
\usepackage{amsmath, amssymb, amsthm, epsfig}
\usepackage[hidelinks]{hyperref}
\usepackage{latexsym}
\usepackage{url}
\usepackage[mathscr]{euscript}
\usepackage{array}
\usepackage{wrapfig}
\usepackage{multirow}
\usepackage{tabularx}

\usepackage{color}
\usepackage{fullpage}
\usepackage{setspace}

\onehalfspacing

\def\today{\ifcase\month\or
	January\or February\or March\or April\or May\or June\or
	July\or August\or September\or October\or November\or December\fi
	\space\number\day, \number\year}

\DeclareMathOperator{\supp}{\mathrm{supp}}
\newtheorem{problem}{Extremal Problem}
\newtheorem{theorem}{Theorem}
\newtheorem*{theoremA}{Theorem A}

\newtheorem{lemma}[theorem]{Lemma}

\theoremstyle{definition}

\theoremstyle{remark}
\newtheorem*{remark}{Remark}

\newcommand{\R}{\mathbb{R}}
\newcommand{\N}{\mathbb{N}}

\newcommand{\Z}{\mathbb{Z}}

\newcommand{\hh}{\tfrac12}

\renewcommand{\d}{\text{\rm d}}

\newcommand{\re}{{\rm Re}\,}

\title[On the $q$-analogue of the PCC]{On the $q$-analogue of the pair correlation conjecture\\ via Fourier optimization}

\author[O.E. Quesada-Herrera]{Oscar E. Quesada-Herrera}
\subjclass[2010]{11M06, 11M26, 41A30}
\keywords{Dirichlet L-function, Pair Correlation Conjecture, Generalized Riemann Hypothesis, Fourier optimization}
\address{IMPA - Instituto Nacional de Matem\'{a}tica Pura e Aplicada - Estrada Dona Castorina, 110, Rio de Janeiro, RJ, Brazil 22460-320}
\email{oscarqh@impa.br}

\date{\today}
\begin{document}

\allowdisplaybreaks
	\numberwithin{equation}{section}
	\maketitle
	
	\begin{abstract}
	We study the $q$-analogue of the average of Montgomery's function $F(\alpha,\, T)$ over bounded intervals. Assuming the Generalized Riemann Hypothesis for Dirichlet $L$-functions, we obtain upper and lower bounds for this average over an interval that are quite close to the pointwise conjectured value of $1$. To compute our bounds, we extend a Fourier analysis approach by Carneiro, Chandee, Chirre, and Milinovich, and apply computational methods of non-smooth programming. 
	
	\end{abstract}
	
	\section{Introduction}
	\subsection{The Pair Correlation Conjecture}
	Let $\zeta(s)$ be the Riemann zeta-function, and assume the Riemann Hypothesis (RH). Understanding the finer aspects of the vertical distribution of the zeros $\rho=\hh+i\gamma$ of $\zeta(s)$ remains an important problem to date. While studying this distribution, Montgomery \cite{Mo}, in 1973, formulated his Pair Correlation Conjecture. It states that
	\begin{equation}\label{eq:pair_correlation}
	    \sum_{\substack{0<\gamma,\, \gamma'\le T\\
	    0<\gamma-\gamma'\le \frac{2\pi\beta}{\log T}}} 1 \sim N(T)\int_0^\beta \left\{1-\left(\frac{\sin \pi u}{\pi u}\right)^2 \right\}\, \d u, 
	\end{equation}
	as $T\to\infty,$ for any fixed $\beta >0$, where the double sum runs over the ordinates of the non-trivial zeros of $\zeta(s)$. Here, $N(T)$ denotes the number of zeros up to height $T$ (counted with multiplicity), and satisfies that, as $T\to\infty$,
\[N(T)\sim \frac{T}{2\pi}\log T.
\]
Therefore, the Pair Correlation Conjecture gives an asymptotic formula for the number of pairs of zeros whose distance is at most $\beta$ times the average gap between zeros, $\frac{2\pi}{\log T}$. \smallskip 

	Montgomery wanted to understand sums involving the differences $(\gamma-\gamma')$, such as the left-hand side of \eqref{eq:pair_correlation}. With this goal in mind, for any $R\in L^1(\R)$, let $\widehat{R}(y)=\int_{-\infty}^\infty R(x)e^{-2\pi i xy}\, \d x$ denote the Fourier transform of $R$. If $\widehat{R}\in L^1(\R)$, Fourier inversion yields the convolution formula
	\[\sum_{0<\gamma,\, \gamma'\le T}R\left(\frac{(\gamma-\gamma')\log T}{2\pi}\right)w(\gamma-\gamma') = N(T)\int_\R F(\alpha)\, \widehat{R}(\alpha)\, \d \alpha,
	\]
	where we introduce a weight $w(u):=\frac{4}{4+u^2}$, and Montgomery's function $F(\alpha)$ is the (suitably weighted and normalized) Fourier transform of the distribution function of the differences $(\gamma-\gamma')$. It is defined as
	\begin{equation*}
	    F(\alpha)= F(\alpha,\, T) := \frac{1}{N(T)}\sum_{0<\gamma, \, \gamma' \le T}T^{i\alpha(\gamma-\gamma')}w(\gamma-\gamma'), 
	\end{equation*}
	where $\alpha\in\R$ and $T\ge 15$. Thanks to the convolution formula, to understand sums over pairs of zeros, and therefore expressions such as the left-hand side of \eqref{eq:pair_correlation}, it is useful to study the asymptotic behavior of $F(\alpha)$, for large $T$. Building on Montgomery's work \cite{Mo}, Goldston and Montgomery \cite[Lemma 8]{GoMo} showed that, as $T\to\infty$,
	\begin{equation}\label{eq:F_asymp}
	    F(\alpha, \, T)=(T^{-2|\alpha|}\log T + |\alpha|)\left(1+O\left(\sqrt{\frac{\log \log T}{\log T}} \right)\right),
	\end{equation}
	uniformly for $|\alpha|\le 1$. Montgomery conjectured that $F(\alpha,\, T)=1+o(1)$, for $|\alpha|\ge 1,$ uniformly in compact intervals. This is the Strong Pair Correlation Conjecture, and it implies \eqref{eq:pair_correlation} by taking suitable functions $R$ in the convolution formula. The Pair Correlation Conjecture \eqref{eq:pair_correlation}, and similarly, the behaviour of $F$ in larger ranges of $\alpha$, have since proved to be deep and difficult questions, being related to important problems such as the behavior of primes in short intervals \cite{GoMo}. In this paper, we are particularly interested in the following relation, proved by Goldston \cite{Go-pair}: the Pair Correlation Conjecture \eqref{eq:pair_correlation} is equivalent to the statement
	\begin{equation}\label{eq:F_average}
	    \frac{1}{\ell}\int_b^{b+\ell}F(\alpha, \, T)\,\d \alpha \sim 1,
	\end{equation}
	as $T\to\infty$, for any fixed $b\ge 1$ and $\ell>0$. For further background on the Pair Correlation Conjecture and its equivalences, see, for instance, \cite{CCCM}, and the references therein. For a gentle introduction to the Pair Correlation Conjecture and its relation to prime numbers, see the notes \cite{Go-notes}.
	
	\subsection{Bounds via Fourier optimization}
	Recently, Carneiro, Chandee, Chirre, and Milinovich \cite{CCCM} studied these averages of $F$ over bounded intervals, by developing a general theoretical framework that relates them to some extremal problems in Fourier analysis. This was inspired by some constructions of Goldston \cite{Go} and Goldston and Gonek \cite{GG}. For example, let $\mathcal{A}_1$ be the class of continuous, even, and non-negative functions $g\in L^1(\R)$ such that $\supp \widehat{g}\subset [-1,1]$. 
	Consider the following extremal problems:\footnote{\,\,\,
	In \cite{CCCM}, the authors work with a larger class of functions instead of $\mathcal{A}_1$. See Section \ref{sec:A_LP} for further comments. Note that our class $\mathcal{A}_1$ is called $\mathcal{A}_0$ in \cite{CCCM}. We make this change of notation since, in Section \ref{sec:theoretical}, other classes $\mathcal{A}_\Delta$ will naturally appear, where the support $[-1,\,1]$ is replaced by $[-\Delta, \, \Delta]$ for a parameter $\Delta$. }
	
	\begin{problem}[EP1]
	Find 
	\begin{equation*}
	   \mathbf{C}^+:= \inf_{g\in\mathcal{A}_1\setminus\{0\}}\frac{\widehat{g}(0)+2\int_0^1 \alpha\, \widehat{g}(\alpha)\,\d \alpha}{\min_{0\le \alpha\le 1}\left\lvert \widehat{g}(\alpha)+\widehat{g}(1-\alpha)\right \rvert}.
	\end{equation*}
	\end{problem}
	\begin{problem}[EP2]
	Define the constant 
	\begin{equation}\label{eq:c0}
	    c_0:= \min_{x\in\R\setminus\{0\}}\frac{\sin x}{x}=-0.2172336282\ldots
	\end{equation}
	Find 
	\begin{equation*}
	   \mathbf{C}^-:= \sup_{\substack{g\in\mathcal{A}_1
	   \\ g(0)>0}}
	   \frac{(1-c_0)g(0) + 
	   c_0\left(\widehat{g}(0)+2\int_0^1 \alpha\, \widehat{g}(\alpha)\,\d \alpha\right)}
	   {\max_{0\le \alpha\le 1}\left(|\widehat{g}(\alpha)|+|\widehat{g}(1-\alpha)|\right)}.
	\end{equation*}
	\end{problem}
	\noindent As a consequence of their general framework, they obtain the following:
	\begin{theoremA}[{c.f. \cite[Theorem 1]{CCCM}}]
	Assume RH, let $b\ge 1$, and let $\varepsilon>0$. For sufficiently large fixed $\ell$ (possibly depending on $b$ and $\varepsilon$), as $T\to\infty$, we have 
	\begin{equation*}
	    \mathbf{C}^- -\varepsilon +o(1) \le \frac{1}{\ell}\int_b^{b+\ell}F(\alpha, \, T)\,\d \alpha \le \mathbf{C}^+ +\varepsilon + o(1).
 	\end{equation*}
	\end{theoremA}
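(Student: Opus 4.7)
The plan is to follow the general framework of Carneiro, Chandee, Chirre, and Milinovich, applying Montgomery's convolution formula with a family of test functions built from an arbitrary $g \in \mathcal{A}_1$, and exploiting two key ingredients: the non-negativity $F(\alpha, T) \geq 0$, and the Goldston--Montgomery asymptotic~\eqref{eq:F_asymp}, which gives complete information about $F$ on $[-1,1]$ as $T \to \infty$. The cornerstone is that, for every shift $\beta \in \R$, choosing $R_\beta(x) := g(x) e^{-2\pi i \beta x}$ in the convolution formula and using $|T^{-i\beta(\gamma-\gamma')}| = 1$ together with $g, w \geq 0$ yields the fundamental estimate
\begin{equation*}
\Bigl| \int_{\R} F(\alpha, T)\,\widehat{g}(\alpha - \beta)\,\d\alpha \Bigr| \leq \int_{\R} F(\alpha, T)\,\widehat{g}(\alpha)\,\d\alpha = \widehat{g}(0) + 2\int_0^1 \alpha\,\widehat{g}(\alpha)\,\d\alpha + o(1),
\end{equation*}
where the final equality uses~\eqref{eq:F_asymp} together with $\supp(\widehat{g}) \subset [-1,1]$.

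For the upper bound, fix $\varepsilon > 0$ and choose $g \in \mathcal{A}_1 \setminus \{0\}$ realizing a quotient in EP1 within $\varepsilon/2$ of $\mathbf{C}^+$; set $m := \min_{0 \le \alpha \le 1}|\widehat{g}(\alpha) + \widehat{g}(1-\alpha)|$. I would form the translate sum $\Phi_\ell(\alpha) := \sum_{k=0}^{\lfloor\ell\rfloor} \widehat{g}(\alpha - (b+k))$, arranged so that on each overlap $[b+k, b+k+1]$ the two consecutive translates combine, via evenness of $\widehat{g}$, into $\widehat{g}(t) + \widehat{g}(1-t)$ whose absolute value is $\geq m$. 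Thus $|\Phi_\ell| \geq m$ on $[b, b+\lfloor\ell\rfloor]$; combining this with $F \geq 0$ (and, if $\widehat{g}$ is signed, an extra splitting into positive and negative parts of $\Phi_\ell$) gives
\begin{equation*}
m \int_b^{b+\ell} F(\alpha, T)\,\d\alpha \leq \sum_{k=0}^{\lfloor\ell\rfloor} \Bigl| \int_{\R} F(\alpha, T)\,\widehat{g}(\alpha - (b+k))\,\d\alpha \Bigr| + O(1).
\end{equation*}
Applying the fundamental estimate to each of the $\lfloor\ell\rfloor+1$ summands bounds the right-hand side by $\ell \cdot \bigl( \widehat{g}(0) + 2\int_0^1 \alpha\,\widehat{g}(\alpha)\,\d\alpha \bigr) + o(\ell)$; dividing by $m\ell$ and choosing $\ell$ large enough that the $O(1/\ell)$ boundary error is $< \varepsilon/2$ completes the upper bound.

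For the lower bound, one dualizes: construct a minorant of a positive multiple of $\mathbf{1}_{[b, b+\ell]}$ from translates of $\widehat{g}$, now possibly with mixed sign --- which is why EP2 features $|\widehat{g}(\alpha)| + |\widehat{g}(1-\alpha)|$ with a $\max$ rather than a $\min$. The constant $c_0 = \min_x \sin(x)/x$ enters through the sharpest pointwise lower bound on the $\cos(\beta(\gamma-\gamma')\log T)$-factor appearing in the real part of the convolution sum (equivalently, through an extremal identity for the Dirichlet kernel), allowing one to pair the diagonal contribution $g(0)\,N(T)$ from $\gamma = \gamma'$ with $c_0$ times the off-diagonal integral to produce the convex combination $(1-c_0)g(0) + c_0\bigl(\widehat{g}(0) + 2\int_0^1 \alpha\,\widehat{g}(\alpha)\,\d\alpha\bigr)$ in the EP2 numerator.

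The hardest step will be the lower bound: the signed structure of $\widehat{g}$ means the minorant is not a sum of non-negatives, so cross terms between distinct shifts must be tracked carefully; the role of $c_0$ also requires a nontrivial optimization of the $\sin(x)/x$ extremal inequality against the zero sum. Both bounds hinge on the boundary errors from the unit-interval pairings being only $O(1)$ as $\ell \to \infty$, so that after dividing by $\ell$ they contribute $o(1)$ and the averages recover exactly the claimed $\mathbf{C}^+ + \varepsilon$ and $\mathbf{C}^- - \varepsilon$ bounds in the limit.
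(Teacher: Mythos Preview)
Your overall strategy matches the paper's (which in turn follows \cite{CCCM}): build majorants/minorants of $\chi_{[b,b+\ell]}$ from integer translates of a single $\widehat g$, bound each translate via the convolution identity, and for the lower bound feed the sum of phases through the Dirichlet kernel to produce the constant $c_0$. However, there is a genuine gap in your upper bound.

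You use only $|\Phi_\ell|\ge m$ on $[b,b+\lfloor\ell\rfloor]$ and then propose, ``if $\widehat g$ is signed, an extra splitting into positive and negative parts of $\Phi_\ell$''. That splitting does not work: $\Phi_\ell^{\pm}$ are no longer of the form $\sum_k \widehat{g_k}(\alpha-\xi_k)$ with $g_k\in\mathcal A_1$, so your fundamental estimate cannot be applied to them, and the chain $m\int_b^{b+\ell}F \le \sum_k\big|\int F\,\widehat g(\cdot-(b+k))\big|$ is unjustified. The paper closes this by two observations you are missing. First, since $\widehat g$ is continuous with $\widehat g(0)>0$, the condition $\min_{0\le\alpha\le1}|\widehat g(\alpha)+\widehat g(1-\alpha)|=m>0$ forces $\widehat g(\alpha)+\widehat g(1-\alpha)>0$ on all of $[0,1]$; hence $\Phi_\ell\ge m$ (not merely $|\Phi_\ell|\ge m$) on the target interval. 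Second, one must have $\Phi_\ell\ge 0$ on \emph{all} of $\R$ so that $\int_{\R}F\,\Phi_\ell\ge \int_b^{b+\ell}F\,\Phi_\ell$; the paper secures this by adding a bounded number of small triangles near the two ends (an $O(1)$ correction). Your floating ``$+O(1)$'' is not free either: without a global non-negative majorant you would need an a priori bound on $\int_{b-1}^{b}F$ and $\int_{b+\ell}^{b+\lfloor\ell\rfloor+1}F$, which is exactly the kind of information the theorem is trying to produce.

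For the lower bound your sketch is in the right spirit but too loose to be checked. In the paper one does \emph{not} build a signed minorant of $\chi_{[b,b+\ell]}$; instead one passes to the symmetric interval via $\int_b^{b+\ell}F=\tfrac12\int_{-\beta}^{\beta}F-\int_0^{b}F$ with $\beta=b+\ell$, places $2n-1$ translates of $\widehat g$ symmetrically so that $\sum_j\widehat g(\alpha-\eta_j)\le\chi_{[-\beta,\beta]}$, and then uses the pointwise inequality $\operatorname{Re}\sum_{k=-(n-1)}^{n-1}e^{2\pi i k x}\ge \mathfrak m(n-1)$ for the Dirichlet kernel together with $\mathfrak m(n)/n\to 2c_0$. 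This is where $c_0$ actually enters, as a lower bound for the \emph{sum} of phases, not for an individual cosine; your description ``sharpest pointwise lower bound on the $\cos$-factor'' would give the useless bound $-1$. The diagonal/off-diagonal split you describe is correct, but it must be combined with the minorant condition $\sum_j\widehat g(\cdot-\eta_j)\le\chi_{[-\beta,\beta]}$ (which controls the denominator $\max(|\widehat g(\alpha)|+|\widehat g(1-\alpha)|)$), and the $b$-dependent term $\int_0^b F$ is what forces $\ell$ to be large depending on $b$.
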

\noindent Additionally, they establish the bounds (see \cite[Corollary 2]{CCCM} and the numerical examples in p.18 and p.20)
	\begin{equation}\label{eq:RH_old_bounds}
	    0.927818< \mathbf{C}^-\le \mathbf{C}^+ < 1.330174,
	\end{equation}
	which give the respective numerical lower and upper bounds for the left-hand side of \eqref{eq:F_average}.
	
	\subsection{$q$-analogues: an average over Dirichlet $L-$functions}
	Montgomery \cite{Mo} also suggested the investigation of the pair correlation of zeros of a family of Dirichlet $L-$functions in the $q$-aspect. One wishes to study the distribution of the low-lying zeros of $L(s,\chi)$, on average over Dirichlet characters $\chi$ (mod $q$), and over $Q\le q \le 2Q$. By taking these averages, one can obtain improvements over what is known for the Riemann zeta-function, and this provides heuristic evidence for the original case. In \cite{CLLR, Oz}, the authors obtained improvements over \eqref{eq:F_asymp} for these $q$-analogues, and used this to obtain lower bounds for the average proportion of simple zeros of Dirichlet L-functions. Later, in \cite{CCLM}, the authors introduced the idea of relating the pair correlation of zeros of $\zeta(s)$, and its $q$-analogue, to some Hilbert spaces of entire functions. Sono \cite{So} used this idea to improve the aforementioned lower bounds on the proportion of simple zeros. These were further improved in \cite{CGL}, by using a different class of functions and sophisticated numerical optimization methods (see Section \ref{sec:A_LP}). \smallskip
	
	To define these $q$-analogues, we must introduce some notation. We use the framework established in \cite{CLLR}, and follow the notation in \cite[Section 6]{CCLM}. Assume the Generalized Riemann Hypothesis for Dirichlet $L$-functions (GRH). Let $\Phi$ be a real-valued function with compact support in $(a,\, b)$, where $0<a<b$. Denote by 
	\begin{equation*}
	    \widetilde{\Phi}(s):=\int_0^\infty \Phi(x)x^{s-1}\, \d x
	\end{equation*}
	its Mellin transform. Additionally, assume that $\Phi(x^{-1})=\Phi(x)$ for all $x\in \R\setminus\{0\}$, that $\widetilde{\Phi}(it)\ge 0$ for all $t\in \R$, and that $|\widetilde{\Phi}(it)| \ll |t|^{-2}$ as $|t|\to\infty$. For instance, a possible choice satisfying all conditions (see \cite{CCLM}) is $\Phi$ such that 
	\begin{equation*}
	    \widetilde{\Phi}(s)=\left(\frac{e^s-e^{-s}}{2s}\right)^2.
	\end{equation*}
	Finally, let $W$ be a smooth, non-negative function with compact support in $(1,\,2)$. We can now define the $q$-analogue of $N(T)$ as
	\begin{equation}\label{eq:N_Phi}
	    N_\Phi(Q):= \sum_q \frac{W(q/Q)}{\phi(q)}\sideset{}{{}^*}\sum_{\chi \, (\text{mod} \, q)} \sum_{\gamma_\chi}|\widetilde{\Phi}(i\gamma_\chi)|^2,
	\end{equation}
	where the second sum (indicated by the superscript *) is over all primitive Dirichlet characters (mod $q$), and the last sum is over all non-trivial zeros $1/2+i\gamma_\chi$ of $L(s,\,\chi)$. Define the $q$-analogue of $F(\alpha, \, T)$ as
	\begin{equation}\label{def:F_phi}
	    F_\Phi(\alpha)= F_\Phi(\alpha,\, Q):= \frac{1}{N_\Phi(Q)}\sum_q \frac{W(q/Q)}{\phi(q)} \sideset{}{{}^*}\sum_{\chi \, (\text{mod} \, q)} \sum_{\gamma_\chi}|\widetilde{\Phi}(i\gamma_\chi)Q^{i\alpha\gamma_\chi} |^2.  
	\end{equation}
	
	\noindent Chandee, Lee, Liu and Radziwi\l\l \,\cite{CLLR} proved an asymptotic formula for $F_\Phi(\alpha)$ similar to \eqref{eq:F_asymp} for $|\alpha|<2$, showing, in particular, that $F_\Phi(\alpha)\sim 1$ when $1\le |\alpha|<2$ (see Lemma \ref{lem:F_phi} below for a full statement). Moreover, they conjectured that $F_\Phi(\alpha)\sim 1$ for all $|\alpha| \ge 1,$ in analogy with Montgomery's original conjecture for $F(\alpha, \, T)$. 
	We may now state our main result, which gives evidence for this conjecture.
	
	\begin{theorem}\label{thm:q-ana_asymp}
		Assume GRH, and let $b\ge 1$. For sufficiently large fixed $\ell$ (possibly depending on $b$), as $Q\to\infty$, we have 
	\begin{equation*}
	    0.982144  + o(1) < \frac{1}{\ell}\int_b^{b+\ell}F_\Phi(\alpha, \, Q)\,\d \alpha < 1.077542 + o(1).
	\end{equation*}
	\end{theorem}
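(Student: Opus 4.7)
The plan is to extend the Fourier-optimisation framework of Carneiro, Chandee, Chirre, and Milinovich \cite{CCCM} from their class $\mathcal{A}_1$ (with $\supp \widehat{g} \subset [-1,1]$) to a larger class $\mathcal{A}_2$ of continuous, even, non-negative $g \in L^1(\R)$ with $\supp \widehat{g} \subset [-2,2]$. The crucial structural input that permits this extension, and that is responsible for the tighter constants relative to \eqref{eq:RH_old_bounds}, is that the Chandee--Lee--Liu--Radziwi\l{}\l{} asymptotic (Lemma \ref{lem:F_phi}) pins down $F_\Phi(\alpha, Q)$ throughout $|\alpha| < 2$, rather than only on $|\alpha| \le 1$ as in the Riemann zeta case.

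First I would derive the $q$-analogue convolution identity
\[
\sum_q \frac{W(q/Q)}{\phi(q)} \sideset{}{^*}\sum_{\chi\,(\mathrm{mod}\,q)} \sum_{\gamma_\chi, \gamma_\chi'} g\!\left(\tfrac{(\gamma_\chi-\gamma_\chi')\log Q}{2\pi}\right) \widetilde{\Phi}(i\gamma_\chi)\,\overline{\widetilde{\Phi}(i\gamma_\chi')} = N_\Phi(Q) \int_{\R} F_\Phi(\alpha, Q)\,\widehat{g}(\alpha)\, \dalpha,
\]
valid for any $g \in \mathcal{A}_2$ with $\widehat{g} \in L^1$. Because $g \ge 0$, the left-hand side is bounded below by its diagonal $\gamma_\chi = \gamma_\chi'$ contribution, producing the positivity inequality $\int_{\R} F_\Phi(\alpha,Q)\,\widehat{g}(\alpha)\,\dalpha \ge g(0) + o(1)$. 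Applied to modulated shifts $e^{2\pi i\beta x} g(x)$ and combined with the CLLR evaluation (which identifies $F_\Phi$ explicitly on $|\alpha|\le 1$ and gives $F_\Phi(\alpha,Q) = 1 + o(1)$ on $1 \le |\alpha| < 2$), this yields an inequality of the shape
\[
\Bigl(\min_{\alpha \in [b,b+\ell]}\sum_{k\in\Z}\widehat{g}(\alpha-2k)\Bigr)\,\frac{1}{\ell}\int_b^{b+\ell} F_\Phi(\alpha,Q)\,\dalpha \le \widehat{g}(0) + 2\int_0^1 \alpha\,\widehat{g}(\alpha)\,\dalpha + 2\int_1^2 \widehat{g}(\alpha)\,\dalpha + o(1),
\]
in which the support parameter $2$ replaces the $1$ of \textbf{EP1}. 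Optimising over $\mathcal{A}_2$ defines an analogue $\mathbf{C}^{+}_{2}$ of $\mathbf{C}^+$; a parallel argument, starting from the pointwise lower bound $F_\Phi(\alpha,Q) \ge c_0 + o(1)$ arising from the sinc minimum, defines the corresponding analogue $\mathbf{C}^{-}_{2}$ of $\mathbf{C}^-$.

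The remaining task, and the main obstacle, is to solve the two extremal problems numerically with enough precision. They are non-smooth (through the $\min$ and $\max$ over a continuum of $\alpha$) and semi-infinite, so straightforward discretisation plus linear programming, on the lines of \cite{CCCM}, is unlikely to deliver the quoted constants $0.982144$ and $1.077542$. I would parametrise $g$ inside a finite-dimensional subspace of $\mathcal{A}_2$ (for instance, via shifted Fej\'er kernels $(\sin \pi x / \pi x)^2$ or a Hermite-type basis), reduce each problem to a semi-infinite programme, and apply the non-smooth optimisation techniques recently developed in \cite{CGL}, which simultaneously produce a near-extremal $g$ and a rigorous dual certificate. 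Substituting the resulting numerical constants into the inequalities of the previous paragraph then yields Theorem \ref{thm:q-ana_asymp}.
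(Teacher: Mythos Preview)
Your high-level strategy is right --- exploit the CLLR asymptotic on $|\alpha|<2$ to allow Fourier-side support (nearly) out to $2$, then optimise analogues of (EP1)--(EP2). This is exactly what the paper does. But several of your implementation choices diverge from the paper in ways that matter.

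First, a technical point: you propose to work directly in $\mathcal{A}_2$, but Lemma \ref{lem:F_phi} is only uniform on $|\alpha|\le 2-\varepsilon$, not at the endpoint. The paper handles this by working in $\mathcal{A}_\Delta$ with $1<\Delta<2$, proving bounds that depend continuously on $\Delta$, and then letting $\Delta\to 2^-$. Equivalently, it dilates a fixed $g\in\mathcal{A}_1$ to $g_\Delta(x)=\Delta g(\Delta x)\in\mathcal{A}_\Delta$, which is why the extremal problems (EP3)--(EP4) are posed over $\mathcal{A}_1$ with the modified integrals $\int_0^{1/2}$ and $\int_{1/2}^1$ (the $\Delta\to 2$ image of $\rho_\Delta(g_\Delta)/\Delta$). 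Your displayed inequality is also off by a factor: with copies spaced $2$ apart, the majorant costs $\rho_2(g)$ per copy and there are $\sim \ell/2$ copies, giving $\rho_2(g)/2$ after dividing by $\ell$ --- this is the factor $2$ in the denominator of $\mathbf{D}^+$.

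Second, your account of the lower bound is incorrect. There is no useful pointwise bound ``$F_\Phi(\alpha,Q)\ge c_0+o(1)$'' (indeed $F_\Phi\ge 0$ trivially, and $c_0<0$). In the paper, $c_0$ enters through (EP7): one places $2n-1$ translates symmetrically on $[-\beta,\beta]$, and bounds the real part of the phase sum $\sum_j e^{2\pi i\eta_j x}$ below by the minimum $\mathfrak{m}(n-1)$ of the Dirichlet kernel; the limit $\mathfrak{m}(n)/n\to 2c_0$ is what produces the $c_0$ in $\mathbf{D}^-$. This mechanism is not captured by a pointwise bound on $F_\Phi$ and needs to be set up explicitly.

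Third, and most consequential for actually reaching $0.982144$ and $1.077542$: the paper tried exactly the semidefinite-programming route you propose (Section \ref{sec:A_LP}) and reports that it \emph{does not} match, let alone beat, the bandlimited approach --- even with polynomials of degree far larger than in \cite{CGL}. The paper's numerics instead use Krein's factorisation $g=|h|^2$ with $\widehat{h}=p\cdot\chi_{[-1/2,1/2]}$ for a polynomial $p$, and then apply Brent's derivative-free principal axis method to the resulting non-smooth finite-dimensional problem. The explicit witnesses are $p_1$ of degree $12$ for $\mathbf{D}^+<1.077542$ and $p_2(x)=-x^2+5$ for $\mathbf{D}^->0.982144$. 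If you follow your SDP plan, you are likely to fall short of the constants in the theorem.
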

	We highlight that our upper and lower bounds are very close to the conjectured value of 1. We also remark that while the size of $\ell$ in the lower bound may depend on $b$, the size of $\ell$ in the upper bound is independent of $b$. A similar situation occurs in Theorem A. For effective bounds that hold for any given $b$ and $\ell$, see Section \ref{sec:triangles}. \smallskip
	
	To prove Theorem \ref{thm:q-ana_asymp}, we develop a framework for estimating these integrals over bounded intervals via Fourier analysis, extending that of \cite{CCCM}. We take advantage of the new information available when $|\alpha|\in [1,\, 2)$, from \cite{CLLR}. This leads to slightly different Fourier extremal problems. For instance, with $\mathcal{A}_1$ and $c_0$ as above, consider the following:
	
	\begin{problem}[EP3]
	Find 
	\begin{equation*}
	   \mathbf{D}^+:= \inf_{g\in\mathcal{A}_1\setminus\{0\}}\frac{\widehat{g}(0)+8\int_0^{1/2} \alpha\, \widehat{g}(\alpha)\,\d \alpha +4\int_{1/2}^1 \widehat{g}(\alpha)\,\d \alpha}{2\min_{0\le \alpha\le 1}\left\lvert \widehat{g}(\alpha)+\widehat{g}(1-\alpha)\right \rvert}.
	\end{equation*}
	\end{problem}
	\begin{problem}[EP4]
	Find 
	\begin{equation*}
	   \mathbf{D}^-:= \sup_{\substack{g\in\mathcal{A}_1
	   \\ g(0)>0}}
	   \frac{(1-c_0)g(0) + 
	   \frac{c_0}{2}\left(\widehat{g}(0)+8\int_0^{1/2} \alpha\, \widehat{g}(\alpha)\,\d \alpha +4\int_{1/2}^1 \widehat{g}(\alpha)\,\d \alpha\right)}
	   {\max_{0\le \alpha\le 1}\left(|\widehat{g}(\alpha)|+|\widehat{g}(1-\alpha)|\right)}.
	\end{equation*}
	\end{problem}
	
	\noindent We show that $\mathbf{D}^- -\varepsilon$ and $\mathbf{D}^+ +\varepsilon$ are lower and upper bounds for the average in Theorem \ref{thm:q-ana_asymp}, respectively (see Lemma \ref{lem:asymptotic} below). The simple choice of test function $\widehat{g}(\alpha)=\max\{(1-|\alpha|),0\}$ already shows that
	\begin{equation*}
	   0.981897 < \mathbf{D}^- \ \text{ and } \  \mathbf{D}^+ < 1.083334.
	\end{equation*}
	To go further, we then numerically optimize the bounds. Note that the functionals in the above extremal problems are not smooth, due to the maximum and minimum in the denominators. Hence, we apply the principal axis method of Brent \cite{Br}, which is an algorithm for unconstrained non-smooth optimization. We also applied our optimization routine to the problems (EP1) and (EP2), and found a minor refinement in the fifth and sixth decimal digits in the bounds \eqref{eq:RH_old_bounds} from \cite[Corollary 2]{CCCM}. It seems that these are very close to the sharp values for the Fourier optimization problems. Under the hypotheses of Theorem A, we find
	\begin{equation*}
	    0.927819  + o(1) < \frac{1}{\ell}\int_b^{b+\ell}F(\alpha, \, T)\,\d \alpha < 1.330144 + o(1).
	\end{equation*}
	
	In Section \ref{sec:theoretical}, we prove a general result relating the integrals of $F_\Phi(\alpha)$ to some extremal problems, extending \cite[Theorem 7]{CCCM}. Subsequently, we use it to relate the problems (EP3) and (EP4) to Theorem \ref{thm:q-ana_asymp}. Furthermore, we provide effective bounds for the integral of $F_\Phi(\alpha)$ over any arbitrary interval, in Theorem \ref{thm:triangles}. In Section \ref{sec:numerical}, we show how to numerically optimize the bounds for (EP1)-(EP4), completing the proof of Theorem \ref{thm:q-ana_asymp}.
	
	\begin{remark}
	    Analogues of Montgomery's function $F(\alpha)$ have also been studied for other families of $L$-functions. Recently, Chandee, Klinger-Logan and Li \cite{CKL} proved an analogue of \eqref{eq:F_asymp} for an average over a family of $\Gamma_1(q)$ $L$-functions, in the range $|\alpha|<2$. Therefore, assuming GRH for this family and for Dirichlet $L$-functions, the conclusion of Theorem \ref{thm:q-ana_asymp} also holds for this family, as $q\to\infty$. See Section \ref{sec:gamma1} for more details. 
	\end{remark}
	
	\subsection{Notation}
	We denote by $\chi_A$ the characteristic function of a set $A$; $\lfloor x\rfloor$ denotes the largest integer smaller than or equal to $x$; $\lceil x \rceil$ denotes the smallest integer greater than or equal to $x$; and $\{x\}:=x-\lfloor x\rfloor$ denotes its fractional part. Additionally, $x_+:=\max\{x,\, 0\}$.
	
	\section{Fourier optimization and the average of $F_\Phi(\alpha)$}\label{sec:theoretical}
	For $\Delta\ge 1$, let $\mathcal{A}_\Delta$ be the class of continuous, even, and non-negative functions $g\in L^1(\R)$ such that $\supp \widehat{g} \subset [-\Delta, \Delta]$. For $g\in \mathcal{A}_\Delta$, denote 
	\begin{equation}
	    \rho_\Delta(g) := \widehat{g}(0)+2 \int_{0}^1 \alpha\, \widehat{g}(\alpha) \, \d \alpha + 2\int_1^{\Delta} \widehat{g}(\alpha)\, \d \alpha.  
	\end{equation}
	From the definition of $F_\Phi(\alpha)$ in \eqref{def:F_phi} and Fourier inversion, we have the convolution formula, for $R\in L^1(\R)$ with $\widehat{R}\in L^1(\R)$:
	\begin{equation}\label{eq:convolution}
	        \sum _q  \frac{W(q/Q)}{\phi(q)}\sideset{}{{}^*}\sum_{\chi \, (\text{mod} \, q)} \sum_{\gamma_\chi,\, \gamma_\chi'}
	        R\left(\frac{(\gamma_\chi-\gamma_\chi')\log Q}{2\pi}
	        \right)\widetilde{\Phi}(i\gamma_\chi)\widetilde{\Phi}(i\gamma_\chi') = 
	        N_\Phi(Q)\int_{-\infty}^\infty F_\Phi(\alpha)\, \widehat{R}(\alpha)\, \d \alpha. 
	\end{equation}
	A crucial tool is the asymptotic formula of Chandee, Lee, Liu and Radziwil\l:
	\begin{lemma}[{c.f. \cite[Theorem 1.2]{CLLR}}]\label{lem:F_phi}
	Assume GRH. Let $\varepsilon>0$. Then
	\begin{equation*}
	    F_\Phi(\alpha, \, Q) = (1+o(1))\left(f(\alpha) + \Phi(Q^{-|\alpha|})^2\log Q \left(\frac{1}{2\pi}\int_{-\infty}^\infty |\widetilde{\Phi}(ix)|^2 \, \d x
	    \right)^{-1}
	    \right)
	    + O\left(\Phi(Q^{-|\alpha|})\sqrt{f(\alpha)\log Q} \right),
	\end{equation*}
	uniformly for $|\alpha|\le 2-\varepsilon,$ as $Q\to\infty$, where $f(\alpha)=\left\{
	\begin{array}{ll}
	    |\alpha|, &\text{for \ } |\alpha|\le 1, \\
	    1, &\text{for \ } |\alpha| >1.
	\end{array}
	\right.$
	\end{lemma}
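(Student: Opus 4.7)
The plan is to build on Montgomery's explicit-formula strategy, extended to the family of primitive Dirichlet $L$-functions as in the work of \"Ozl\"uk and Chandee--Lee--Liu--Radziwi\l\l. The first observation is that the inner character sum in the definition of $F_\Phi(\alpha,Q)$ can be rewritten as $|S_\chi(\alpha)|^2$, where
\begin{equation*}
S_\chi(\alpha) := \sum_{\gamma_\chi} \widetilde{\Phi}(i\gamma_\chi)\, Q^{i\alpha\gamma_\chi}.
\end{equation*}
Using the symmetry $\Phi(x^{-1})=\Phi(x)$ together with the functional equation of $L(s,\chi)$, I would convert $S_\chi(\alpha)$ into a Dirichlet polynomial over prime powers of length $\asymp Q^{|\alpha|}$, schematically of the form
\begin{equation*}
S_\chi(\alpha) \;=\; -Q^{\alpha/2}\sum_n \frac{\Lambda(n)\chi(n)}{\sqrt n}\,\Phi\!\left(\frac{n}{Q^{\alpha}}\right) \;-\; Q^{-\alpha/2}\sum_n \frac{\Lambda(n)\overline{\chi}(n)}{\sqrt n}\,\Phi\!\left(\frac{n}{Q^{-\alpha}}\right) \;+\; R_\chi(\alpha),
\end{equation*}
where $R_\chi(\alpha)$ collects the archimedean and trivial-zero contributions. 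The hypotheses that $\widetilde{\Phi}(it)\ge 0$ and $|\widetilde{\Phi}(it)|\ll |t|^{-2}$ guarantee good convergence and smallness of the remainder.

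Next, I would expand $|S_\chi(\alpha)|^2$, average over primitive characters $\chi\pmod q$ via orthogonality (which produces the congruence $n_1\equiv n_2\pmod q$ in each squared piece and $n_1n_2\equiv 1\pmod q$ in the cross term), and sum over $q$ against $W(q/Q)/\phi(q)$. Separating the resulting expression into diagonal ($n_1=n_2$) and off-diagonal pieces, the diagonals of $|{\cdot}|^2_{\text{first}}$ and $|{\cdot}|^2_{\text{second}}$, evaluated by the prime number theorem on the scale $Q^{|\alpha|}$, furnish the factor $f(\alpha)\,N_\Phi(Q)$: for $|\alpha|\le 1$ only the first diagonal is active (producing $|\alpha|$), while for $1<|\alpha|<2$ both contribute and combine to give the constant $1$. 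The secondary main term $\Phi(Q^{-|\alpha|})^2\log Q\cdot\bigl(\tfrac{1}{2\pi}\int|\widetilde{\Phi}|^2\bigr)^{-1}$ then arises from cross-interactions between $R_\chi(\alpha)$ and the prime sums, and is genuinely non-negligible only for $\alpha$ near $0$.

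The main obstacle is the off-diagonal contribution $n_1\neq n_2$ with $n_1\equiv n_2\pmod q$, which after summing over $q\asymp Q$ becomes a bilinear form over primes in arithmetic progressions with the two variables up to $Q^{|\alpha|}$ and modulus up to $Q$. Large-sieve / Barban--Davenport--Halberstam-type inequalities deliver square-root cancellation in this regime precisely when $|\alpha|<2$, which is the ultimate source of the range restriction; the cross-term $n_1n_2\equiv 1\pmod q$ is handled analogously. The error $O\bigl(\Phi(Q^{-|\alpha|})\sqrt{f(\alpha)\log Q}\bigr)$ would then follow by Cauchy--Schwarz applied to the interaction between the diagonal and $R_\chi(\alpha)$. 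The genuinely delicate step is not the upper bound on these bilinear forms but the extraction of the exact $(1+o(1))$ leading constant; to reach $|\alpha|$ arbitrarily close to $2$ one needs sharp asymptotic equidistribution statements for primes in arithmetic progressions on average over the modulus, which is the technical heart of \cite{CLLR}.
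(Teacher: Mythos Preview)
The paper does not prove this lemma at all: it is quoted verbatim from \cite[Theorem~1.2]{CLLR} and used as a black box, so there is no ``paper's own proof'' to compare against. Your sketch is a reasonable high-level outline of the argument in \cite{CLLR}, and you correctly identify at the end that the decisive input is an \emph{asymptotic} equidistribution statement for primes in progressions on average (the asymptotic large sieve of Conrey--Iwaniec--Soundararajan), which is what pushes the range past $|\alpha|=1$.

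There is one genuine inaccuracy in your heuristic, however. For $\alpha>0$ the second Dirichlet polynomial $\sum_n \Lambda(n)\overline{\chi}(n)n^{-1/2}\Phi(nQ^{\alpha})$ is empty once $Q$ is large, since $\Phi$ has compact support in $(0,\infty)$ bounded away from $0$ and $\infty$; so your claim that for $1<|\alpha|<2$ ``both diagonals contribute and combine to give the constant $1$'' cannot be right. What actually happens is that only the first sum survives, its diagonal contributes $|\alpha|$ as before, and it is the \emph{off-diagonal} terms $n_1\neq n_2$, $n_1\equiv n_2\pmod q$ that are no longer negligible once the length $Q^{|\alpha|}$ exceeds the modulus scale $Q$: the asymptotic large sieve evaluates them precisely and they furnish the correction $-(|\alpha|-1)$, yielding $f(\alpha)=1$. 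Ordinary large-sieve or Barban--Davenport--Halberstam bounds only give an upper bound here and would not suffice; this is exactly the point you flag in your last sentence, but it should replace, not supplement, the ``two diagonals'' picture earlier in the sketch.
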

	\noindent By Plancherel's theorem for the Mellin transform, the term $\Phi(Q^{-|\alpha|})^2\log Q \left(\frac{1}{2\pi}\int_{-\infty}^\infty |\widetilde{\Phi}(ix)|^2 \, \d x
	    \right)^{-1}$ behaves like a Dirac delta at the origin (see the argument in \cite[pp. 82--83]{CLLR}). Therefore, for any fixed $1\le \Delta <2$ and $g\in \mathcal{A}_\Delta$, from \eqref{eq:convolution}, we obtain
	    \begin{equation}\label{eq:conv_rho}
	         \frac{1}{N_\Phi(Q)}\sum _q  \frac{W(q/Q)}{\phi(q)}\sideset{}{{}^*}\sum_{\chi \, (\text{mod} \, q)} \sum_{\gamma_\chi,\, \gamma_\chi'}
	        g\left(\frac{(\gamma_\chi-\gamma_\chi')\log Q}{2\pi}
	        \right)\widetilde{\Phi}(i\gamma_\chi)\widetilde{\Phi}(i\gamma_\chi')
	        =  \rho_\Delta(g) +o(1) , 
	    \end{equation}
	    as $Q\to\infty$. \smallskip
	    
	    The following problems are essentially those considered in \cite[Section 2.1.1]{CCCM}, which correspond to the case $\Delta = 1$. For any $\Delta\ge 1$, we may consider the following variations: 
	    
	    \begin{problem}[EP5]\label{prob:+}
	    Let $\ell>0$ and $\Delta\ge1$. Find 
	    \begin{equation*}
	        \mathcal{W}^+_\Delta(\ell):=\inf \sum_{j=1}^N \rho_\Delta(g_j),
	    \end{equation*}
	    where the infimum is taken over $N$ and all collections $g_1, \, g_2, \, \ldots, \, g_N\in\mathcal{A}_\Delta$ such that there exist points $\xi_1, \, \xi_2, \, \ldots, \, \xi_N\in\R$, with 
	    \begin{equation}\label{eq:boundCharac1}
	        \sum_{j=1}^N \widehat{g}_j(\alpha-\xi_j)\ge \chi_{[0,\, \ell]}(\alpha)
	    \end{equation}
	    for all $\alpha\in\R$.
	    \end{problem}
	    
	    \begin{problem}[EP6]
	    Let $\ell>0$ and $\Delta\ge1$. Find 
	    \begin{equation}\label{eq:ep6}
	        \mathcal{W}_\Delta^-(\ell):=\sup \sum_{j=1}^N (2g_j(0)- \rho_\Delta(g_j)),
	    \end{equation}
	    where the supremum is taken over $N$ and all collections $g_1, \, g_2, \, \ldots, \, g_N\in\mathcal{A}_\Delta$ such that there exist points $\xi_1, \, \xi_2, \, \ldots, \, \xi_N\in\R$, with 
	    \begin{equation}\label{eq:boundCharac2}
	        \sum_{j=1}^N \widehat{g}_j(\alpha-\xi_j)\le \chi_{[0,\, \ell]}(\alpha)
	    \end{equation}
	    for all $\alpha\in\R$.
	    \end{problem}
	    
	    \begin{problem}[EP7]
	    Let $b,\, \beta\in\R$ with $b<\beta$, and $\Delta\ge1$. Find 
	    \begin{equation}\label{eq:ep7}
	        \mathcal{W}^-_{*, \, \Delta}(b, \,\beta):=\sup \sum_{j=1}^N (g_j(0) + \tau_j( \rho_\Delta(g_j) - g_j(0))),
	    \end{equation}
	    where the supremum is taken over $N$ and all collections $g_1, \, g_2, \, \ldots, \, g_N\in\mathcal{A}_\Delta$ such that there exist points $\eta_1, \, \eta_2, \, \ldots, \, \eta_N\in\R$ and values $\tau_1, \, \tau_2, \, \ldots, \, \tau_N\in\R$, such that 
	    \begin{equation}\label{eq:boundCharac3}
	        \sum_{j=1}^N \widehat{g}_j(\alpha-\eta_j)\le \chi_{[b,\, \beta]}(\alpha)
	    \end{equation}
	    for all $\alpha\in\R$, and 
	    \begin{equation}\label{eq:tau_cond}
	        \re\left( \sum_{j=1}^N e^{2\pi i \eta_j x }g_j(x)
	        \right) \ge \sum_{j=1}^N \tau_j g_j(x),
	    \end{equation}
	    for all $x\in \R$.
	    \end{problem}
	    
	    The following result relates the problem of estimating integrals of $F_\Phi(\alpha)$ to the above problems in Fourier analysis. This general result will allow us to obtain all our bounds for these integrals. As we shall see, while the abstract formulation of these problems and the general result in Lemma \ref{lem:general} are analogous to those in \cite{CCCM}, the novelty lies in the way we may explore them, by taking advantage of the new possibilities with $1<\Delta<2$, and its interplay with the other parameters. We anticipate that, when applying Lemma \ref{lem:general}, we will usually have in mind the limit $\Delta\to2^-$.
	    
	    \begin{lemma}\label{lem:general}
	        Assume GRH, let $b\in\R$ and $\ell>0$. Let $1\le \Delta < 2.$ Then, as $Q\to\infty$, we have
	        \begin{equation}\label{eq:lem_gen}
	            \mathcal{W}_\Delta^{-}(\ell)+o(1) \le \mathcal{W}^-_{*,\, \Delta}(b,\, b+\ell) + o(1) \le \int_b^{b+\ell} F_\Phi(\alpha, \, Q) \, \d \alpha \le \mathcal{W}_\Delta^+(\ell) +o(1).
	        \end{equation}
	    \end{lemma}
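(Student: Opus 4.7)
The plan is to extend the argument of \cite[Theorem 7]{CCCM}, using the convolution identity \eqref{eq:convolution} with translated and modulated copies of the given test functions, together with the asymptotic \eqref{eq:conv_rho}. The positivity facts $F_\Phi\ge0$, $g_j\ge0$, and $\widetilde{\Phi}(it)\ge0$ will be used throughout to preserve inequalities after summing over pairs of zeros.

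For the rightmost inequality, let $(g_j,\xi_j)_{j=1}^N$ be admissible for (EP5). Translating \eqref{eq:boundCharac1} by $b$ gives $\sum_j\widehat{g}_j(\alpha-b-\xi_j)\ge\chi_{[b,b+\ell]}(\alpha)$, hence by $F_\Phi\ge0$,
$$\int_b^{b+\ell}F_\Phi(\alpha,Q)\,\d\alpha\le\sum_{j=1}^N\int F_\Phi(\alpha,Q)\,\widehat{g}_j(\alpha-b-\xi_j)\,\d\alpha.$$
Applying \eqref{eq:convolution} with $R_j(x):=e^{2\pi i(b+\xi_j)x}g_j(x)$ and taking real parts (justified since the left side is real), the $j$-th summand equals a cosine-weighted sum over pairs of zeros. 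The bounds $\cos(\cdot)\le1$, $g_j\ge0$, and $\widetilde{\Phi}\ge0$ then upper-bound it by the unweighted sum, which by \eqref{eq:conv_rho} equals $\rho_\Delta(g_j)+o(1)$. Summing and taking the infimum yields the bound $\mathcal{W}_\Delta^+(\ell)+o(1)$.

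For the middle inequality, let $(g_j,\eta_j,\tau_j)_{j=1}^N$ be admissible for (EP7). Using $F_\Phi\ge0$ together with \eqref{eq:boundCharac3} gives
$$\int_b^{b+\ell}F_\Phi(\alpha,Q)\,\d\alpha\ge\sum_j\int F_\Phi(\alpha,Q)\,\widehat{g}_j(\alpha-\eta_j)\,\d\alpha.$$
Applying \eqref{eq:convolution} with $R_j(x):=e^{2\pi i\eta_jx}g_j(x)$, summing in $j$, and taking real parts, the right-hand side becomes a weighted sum over pairs $(\gamma_\chi,\gamma'_\chi)$ whose integrand factor is $\re\bigl(\sum_je^{2\pi i\eta_jx_{\gamma\gamma'}}g_j(x_{\gamma\gamma'})\bigr)$, where $x_{\gamma\gamma'}:=(\gamma_\chi-\gamma'_\chi)\log Q/(2\pi)$. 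We split along the diagonal: the contribution from $\gamma_\chi=\gamma'_\chi$ evaluates exactly to $\sum_jg_j(0)$ by the definition \eqref{eq:N_Phi} of $N_\Phi(Q)$. On the off-diagonal, condition \eqref{eq:tau_cond} applied at $x=x_{\gamma\gamma'}$ replaces the integrand factor by $\sum_j\tau_jg_j(x_{\gamma\gamma'})$, an inequality preserved after multiplying by $\widetilde{\Phi}(i\gamma_\chi)\widetilde{\Phi}(i\gamma'_\chi)\ge0$ and summing; applying \eqref{eq:conv_rho} to each $g_j$ and subtracting the diagonal shows that this off-diagonal piece equals $\sum_j\tau_j(\rho_\Delta(g_j)-g_j(0))+o(1)$. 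Adding the two pieces and passing to the supremum yields $\mathcal{W}^-_{*,\Delta}(b,b+\ell)+o(1)$.

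Finally, the leftmost inequality is a direct comparison of Fourier problems. Given $(g_j,\xi_j)$ admissible for (EP6), set $(\eta_j,\tau_j):=(b+\xi_j,-1)$: the translation converts \eqref{eq:boundCharac2} into \eqref{eq:boundCharac3}, while the triangle inequality and $g_j\ge0$ give
$$\re\Bigl(\sum_je^{2\pi i\eta_jx}g_j(x)\Bigr)\ge-\Bigl|\sum_je^{2\pi i\eta_jx}g_j(x)\Bigr|\ge-\sum_jg_j(x)=\sum_j\tau_jg_j(x),$$
verifying \eqref{eq:tau_cond}. The objective values match, since $g_j(0)+(-1)(\rho_\Delta(g_j)-g_j(0))=2g_j(0)-\rho_\Delta(g_j)$, so the suprema compare as desired. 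The principal technical point is the diagonal/off-diagonal decomposition in the middle inequality: the exact evaluation of the diagonal as $\sum_jg_j(0)$, together with the pointwise use of \eqref{eq:tau_cond} on the off-diagonal, both crucially enabled by the positivity hypotheses on $g_j$ and $\widetilde{\Phi}$, is what upgrades the crude bound $\mathcal{W}_\Delta^-(\ell)$ to the sharper $\mathcal{W}^-_{*,\Delta}(b,b+\ell)$.
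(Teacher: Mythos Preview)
Your argument follows the paper's proof essentially line for line: the upper bound via \eqref{eq:boundCharac1}, the convolution formula, and $|e^{i\theta}|\le 1$; the comparison $\mathcal{W}_\Delta^-(\ell)\le\mathcal{W}_{*,\Delta}^-(b,b+\ell)$ via the substitution $(\eta_j,\tau_j)=(b+\xi_j,-1)$; and the lower bound via the diagonal/off-diagonal split together with \eqref{eq:tau_cond}. All of this is correct and matches the paper.

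There is one point you gloss over that the paper treats with care. You assert that ``the contribution from $\gamma_\chi=\gamma'_\chi$ evaluates \emph{exactly} to $\sum_j g_j(0)$ by the definition \eqref{eq:N_Phi} of $N_\Phi(Q)$.'' Under GRH alone the zeros are not known to be simple, so if ``diagonal'' means \emph{equal ordinate} then a zero of multiplicity $m_{\gamma_\chi}$ contributes $m_{\gamma_\chi}$ times to the pair sum for each occurrence, and the diagonal equals
\[
\frac{1}{N_\Phi(Q)}\sum_q\frac{W(q/Q)}{\phi(q)}\sideset{}{{}^*}\sum_{\chi}\sum_{\gamma_\chi}m_{\gamma_\chi}\,\widetilde{\Phi}(i\gamma_\chi)^2\cdot\sum_j g_j(0),
\]
which is only $\ge \sum_j g_j(0)$, not equal to it. The paper makes this explicit by introducing $\kappa_{\gamma_\chi}:=m_{\gamma_\chi}\widetilde{\Phi}(i\gamma_\chi)^2$ and then using the inequality $\sum_{\gamma_\chi}\kappa_{\gamma_\chi}\ge N_\Phi(Q)$; the factor in front of this ratio after recombining is $\sum_j g_j(0)(1-\tau_j)$, which is nonnegative by \eqref{eq:tau_cond} evaluated at $x=0$, so the inequality goes the right way. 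Your ``subtract the diagonal'' step for the off-diagonal piece has the same issue and is repaired by the same observation. Alternatively, if you intend ``diagonal'' to mean \emph{same index in the multiset of zeros} (so that repeated ordinates with distinct indices land in the off-diagonal, where \eqref{eq:tau_cond} at $x=0$ handles them), then your claim is literally correct---but you should say so. Either way the conclusion stands; just make the treatment of multiplicities explicit.
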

	    \noindent The proof is essentially that of \cite[Theorem 7]{CCCM}, where the authors prove the analogous result for integrals of $F(\alpha, \, T)$, with $\Delta=1$. We reproduce it below, in our context, for the reader's convenience.
	    \begin{proof}
	        Assume that the bound \eqref{eq:boundCharac1} holds. We use it, combined with the convolution formula \eqref{eq:convolution} and \eqref{eq:conv_rho}, to find
	        \begin{align*}
	            &\int_b^{b+\ell} F_\Phi(\alpha) \, \d \alpha \\
	            \le& 
	            \sum_{j=1}^N \int_\R F_\Phi(\alpha)\, \widehat{g}_j(\alpha - b- \xi_j)\, \d \alpha \\
	            =& \frac{1}{N_\Phi(Q)} \sum_{j=1}^N 
	            \sum _q  \frac{W(q/Q)}{\phi(q)}\sideset{}{{}^*}\sum_{\chi \, (\text{mod} \, q)} \sum_{\gamma_\chi,\, \gamma_\chi'}
	        Q^{i(b+\xi_j)(\gamma_\chi-\gamma_\chi')} g_j\left(\frac{(\gamma_\chi-\gamma_\chi')\log Q}{2\pi}
	        \right)\widetilde{\Phi}(i\gamma_\chi)\widetilde{\Phi}(i\gamma_\chi') \\
	        \le & 
	        \frac{1}{N_\Phi(Q)} \sum_{j=1}^N 
	            \sum _q  \frac{W(q/Q)}{\phi(q)}\sideset{}{{}^*}\sum_{\chi \, (\text{mod} \, q)} \sum_{\gamma_\chi,\, \gamma_\chi'} g_j\left(\frac{(\gamma_\chi-\gamma_\chi')\log Q}{2\pi}
	        \right)\widetilde{\Phi}(i\gamma_\chi)\widetilde{\Phi}(i\gamma_\chi') \\
	        =& \sum_{j=1}^N \rho_\Delta(g_j)+o(1).
	        \end{align*}
	        This implies the upper bound in \eqref{eq:lem_gen}. To obtain the last inequality, we used the fact that $W(t)$, $g_j(t)$, and $\widetilde{\Phi}(it)$ are all non-negative (for $t\in\R$). \smallskip
	        
	        For the lower bound, we first note that $\mathcal{W}_\Delta^{-}(\ell)\le \mathcal{W}^-_{*,\, \Delta}(b,\, b+\ell)$. To see this, take a configuration that satisfies \eqref{eq:boundCharac2}.  Let $\beta=b+\ell$. Then, taking $\eta_j=\xi_j+b$, \eqref{eq:boundCharac3} is verified, and choosing $\tau_j=-1$ for all $j$, \eqref{eq:tau_cond} is also verified. With these choices, \eqref{eq:ep7} reduces to \eqref{eq:ep6}, as desired. 
	        It remains to show that $\mathcal{W}^-_{*,\, \Delta}(b,\, b+\ell) + o(1) \le \int_b^{b+\ell} F_\Phi(\alpha, \, Q) \, \d \alpha.$ Given a zero $\frac{1}{2}+i\gamma_\chi$ of $L(s,\, \chi)$ of multiplicity $m_{\gamma_\chi}$, denote $\kappa_{\gamma_\chi}:=m_{\gamma_\chi}\widetilde{\Phi}(i\gamma_\chi)^2$. Assume that \eqref{eq:boundCharac3} and \eqref{eq:tau_cond} hold. We again use them with \eqref{eq:convolution} and \eqref{eq:conv_rho} to obtain
	        \begin{align*}
	            &\int_b^{b+\ell} F_\Phi(\alpha) \, \d \alpha \\
	            \ge& 
	            \sum_{j=1}^N \int_\R F_\Phi(\alpha)\, \widehat{g}_j(\alpha - \eta_j)\, \d \alpha \\
	            =& \frac{1}{N_\Phi(Q)} \sum_{j=1}^N 
	            \sum _q  \frac{W(q/Q)}{\phi(q)}
	            \!\sideset{}{{}^*}\sum_{\chi \, (\text{mod} \, q)} \sum_{\gamma_\chi,\, \gamma_\chi'}
	        Q^{i\eta_j(\gamma_\chi-\gamma_\chi')} g_j\left(\frac{(\gamma_\chi-\gamma_\chi')\log Q}{2\pi}
	        \right)\widetilde{\Phi}(i\gamma_\chi)\widetilde{\Phi}(i\gamma_\chi') \\
	        = & 
	        \frac{1}{N_\Phi(Q)} \sum_{j=1}^N 
	            \sum _q  \frac{W(q/Q)}{\phi(q)}
	            \!\sideset{}{{}^*}\sum_{\chi \, (\text{mod} \, q)} \left\{ 
	            g_j(0) \sum_{\gamma_\chi}\kappa_{\gamma_\chi}+
	            \sum_{\gamma_\chi \neq \gamma_\chi'} 
	        Q^{i\eta_j(\gamma_\chi-\gamma_\chi')} g_j\left(\frac{(\gamma_\chi-\gamma_\chi')\log Q}{2\pi}
	        \right)\widetilde{\Phi}(i\gamma_\chi)\widetilde{\Phi}(i\gamma_\chi')  
	        \right\}\\
	        \ge & 
	        \frac{1}{N_\Phi(Q)} \sum_{j=1}^N 
	            \sum _q  \frac{W(q/Q)}{\phi(q)}
	            \!\sideset{}{{}^*}\sum_{\chi \, (\text{mod} \, q)} \left\{ 
	            g_j(0)(1-\tau_j) \sum_{\gamma_\chi}\kappa_{\gamma_\chi} +
	            \tau_j\sum_{\gamma_\chi, \,\gamma_\chi'} 
	        g_j\left(\frac{(\gamma_\chi-\gamma_\chi')\log Q}{2\pi}
	        \right)\widetilde{\Phi}(i\gamma_\chi)\widetilde{\Phi}(i\gamma_\chi')  
	        \right\}\\
	        \ge& \sum_{j=1}^N ( g_j(0) + \tau_j(\rho_\Delta(g_j)-g_j(0)) ) +o(1).
	        \end{align*}
	        This gives the desired lower bound. To obtain the last inequality, we used that, by  \eqref{eq:N_Phi},
	        \[
	        \sum _q  \frac{W(q/Q)}{\phi(q)}
	            \sideset{}{{}^*}\sum_{\chi \, (\text{mod} \, q)} \sum_{\gamma_\chi}\kappa_{\gamma_\chi}
	            \ge \sum _q  \frac{W(q/Q)}{\phi(q)}
	            \sideset{}{{}^*}\sum_{\chi \, (\text{mod} \, q)} \sum_{\gamma_\chi}\widetilde{\Phi}(i\gamma_\chi)^2 = N_\Phi(Q).
	        \]
	    \end{proof}
	    
	    \begin{remark}
	Note that if $\Delta_1\le \Delta_2$, then $\mathcal{A}_{\Delta_1}\subset \mathcal{A}_{\Delta_2}$. Therefore, $\mathcal{W}^+_\Delta(\ell)$ is non-increasing with $\Delta$, while $\mathcal{W}^-_\Delta(\ell)$ and $\mathcal{W}^-_{*, \, \Delta}(b, \, \beta)$ are non-decreasing with $\Delta$. For some properties regarding monotonicity, subadditivity, and other basic facts on the above functions $\mathcal{W}^\pm_\Delta$, we refer to \cite[Proposition 6]{CCCM}, which continues to hold for any $\Delta\ge 1$. Also, note that in the statement of Lemma \ref{lem:general}, the parameters $b$, $\ell$ and $\Delta$ are all free and independent. Here and henceforth, the error term $o(1)$ should be regarded as a function of $Q$, which may depend on all other fixed parameters ($b$, $\ell$ and $\Delta$). 
	    \end{remark}
	    
	\subsection{Triangle bounds}\label{sec:triangles}
	Here, we give simple, effective bounds for the integral of $F_\Phi(\alpha)$ over an arbitrary interval, by using (EP5) and (EP6) with the functions $\widehat{g}_j$ chosen as triangles. Our bounds have the property of being continuous and non-decreasing with $\ell.$ To begin, let $\Delta\ge 1$. For $0<\delta \le \Delta$, let
	\begin{equation}
	    K_\delta(x)= \delta\left(\frac{\sin \pi \delta x}{\pi\delta x} \right)^2\ \text{ and }\ \widehat{K_\delta}(\alpha)=\left( 1-\frac{|\alpha|}{\delta} \right)_+.
	\end{equation}
	Note that
	\begin{equation}\label{eq:rho_k_delta}
	    \rho_\Delta (K_\delta) = \left\{
	    \begin{array}{ll}
	    1+\frac{\delta^2}{3}, &\text{if \ } 0<\delta \le 1, \\
	    \delta + \frac{1}{3\delta}, &\text{if \ } 1<\delta \le \Delta.
	\end{array}
	    \right.
	\end{equation}
	\begin{figure}[t] 
   \centering
   \includegraphics[width=3in]{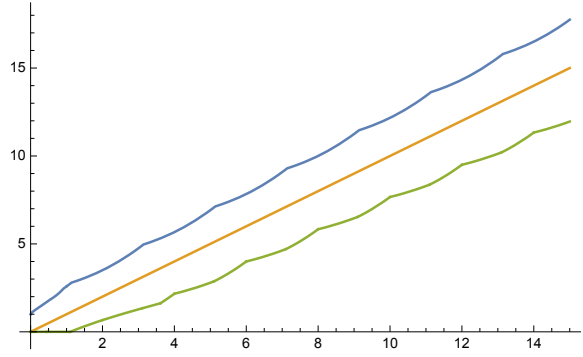}
   \label{fig:bounds}\caption{The upper bound (in blue) and the lower bound (in green) given in Theorem \ref{thm:triangles}, compared with the q-analogue of the Pair Correlation Conjecture (in yellow).}
\end{figure}
	\begin{theorem}\label{thm:triangles}
	Assume GRH, let $b\ge 1$, and let $\ell>0$. Then, as $Q\to\infty$, we have 
	\begin{equation*}
	    \mathcal{C}^-(\ell) +o(1) \le  \int_b^{b+\ell} F_\Phi(\alpha, \, Q) \, \d \alpha \le  \mathcal{C}^+(\ell) +o(1),
	\end{equation*}
	where
	%
	\begin{equation}\label{eq:triangle_upper}
	    \mathcal{C}^+(\ell)  = 
	    \left\{
	    \begin{array}{l}
	        \!\!\frac{13(\ell+2) }{12}
	         +\frac{1}{3}\left\{\frac{\ell}{2}\right\}^3 -\frac{7 }{6}\left\{\frac{\ell}{2}\right\}
	         -\frac{1}{6}\left(2\left\{\frac{\ell}{2}\right\}^3-6\left\{\frac{\ell}{2}\right\}^2-6\left\{\frac{\ell}{2}\right\}+5\right)_+, \ \ \ 
	    \rm{if }\  \ell \ge 1,  \\
	    \!\!\min\left \{\!
	    \frac{13 (\ell+2)}{12}
	         +\frac{1}{3}\left\{\frac{\ell}{2}\right\}^3 -\frac{7 }{6}\left\{\frac{\ell}{2}\right\}
	         -\frac{1}{6}\left(2\left\{\frac{\ell}{2}\right\}^3-6\left\{\frac{\ell}{2}\right\}^2-6\left\{\frac{\ell}{2}\right\}+5\right)_+ ;
	    \,\,(1+c)\left(1+\frac{\ell^2(1+c)^2}{12c^2}
	    \right)
	    \right\}\, , \\ 
	 	 \hspace{3.5cm}\rm{if  }\  
	  0<\ell <1, \ \rm{ with } \ c = \max\left \{6^{-1/3}\ell^{2/3}\, ; \,\frac{\ell}{2-\ell} \right\};
	\end{array}
	    \right.
	\end{equation}
	and 
	%
	\begin{equation}
	    \mathcal{C}^-(\ell)  = 
	    \left\{
	    \begin{array}{l}
	        \max\left\{
	        \frac{11 (\ell-2)}{12}+
	        
	        \frac{1}{2} \left\{\frac{\ell}{2} \right\}^2-\frac{5}{6} \left\{\frac{\ell}{2} \right\}+\frac{1}{3}+
	        \frac{1}{6}\left\{\frac{\ell}{2} \right\}  
	        \left(-\left\{\frac{\ell}{2} \right\}^2+ 6\left\{\frac{\ell}{2} \right\}-3\right)_+
	        \, \, ;\ \  
	        \frac{1}{2}\ell-\frac{2}{3\ell}
	        \right\}\,\, , \,\,\,\,
	    \rm{if }\  \ell \ge 2,  \\
	    \left (
	    \ell-1-\frac{\ell^2}{12}
	    \right)_+, 
	    \hspace{2cm}
	 	 \rm{if  }\  
	  0<\ell \le 2.
	\end{array}
	    \right.
	\end{equation}
	\end{theorem}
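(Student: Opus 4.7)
The plan is to apply Lemma \ref{lem:general} in the limit $\Delta \to 2^-$ and exhibit, for each range of $\ell$, explicit families of triangle functions $\widehat{g}_j = \lambda_j \,\widehat{K_{\delta_j}}$ that realize (or lie above) the stated bounds $\mathcal{C}^{\pm}(\ell)$. Recall that $K_\delta$ has tent Fourier transform of base width $2\delta$ and height $1$, with $K_\delta(0)=\delta$ and $\rho_\Delta(K_\delta)$ given by \eqref{eq:rho_k_delta}. The key structural identity is that two triangles of base width $\approx 4$ centered at distance $2$ apart sum to $1$ on the segment between them,
\[\widehat{K_\delta}(\alpha-\xi)+\widehat{K_\delta}(\alpha-\xi-2)=1\quad\text{for }\alpha\in[\xi,\xi+2]\text{ when }\delta=2,\]
which makes width-$2\Delta$ triangles with $\Delta\to 2^-$ the natural interior building block in both directions.

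For the upper bound with $\ell \ge 1$, we place such width-$\approx 4$ triangles at odd points of $[0,\ell]$ to ``quilt'' the interval, then add narrower, possibly scaled ``corner'' triangles at the endpoints to fill the residual tent-shaped gaps. The simplest instance, $\ell=2$, uses one central tent $\widehat{K_\Delta}(\alpha-1)$ plus two copies of $\tfrac12\widehat{K_1}$ shifted to $0$ and $2$, giving total cost $\tfrac{13}{6}+2\cdot\tfrac12\cdot\tfrac{4}{3}=\tfrac{7}{2}$, in agreement with the formula at $u=\{\ell/2\}=0$. For non-integer $\ell/2$, the optimal shape of the corner triangles must be re-derived as a function of $u$, and two competing corner configurations produce the polynomial in $u$ together with the $(\cdot)_+$ switch. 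For $0<\ell<1$ no interior tent fits, so a single scaled tent $(1+c)\,\widehat{K_\delta}(\alpha-\ell/2)$ with $\delta = \ell(1+c)/(2c)$ dominates $\chi_{[0,\ell]}$; minimizing the cost $(1+c)(1+\delta^2/3)$ over $c>0$ under the constraint $\delta\le 1$ gives $c=6^{-1/3}\ell^{2/3}$ when feasible and $c=\ell/(2-\ell)$ otherwise, producing the stated $\min\{\,\cdot\,;\,\cdot\,\}$.

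For the lower bound the construction is dual. When $0<\ell\le 2$, a single triangle $\widehat{K_{\ell/2}}(\alpha-\ell/2)$ automatically satisfies $\le\chi_{[0,\ell]}$, and
\[2K_{\ell/2}(0)-\rho_\Delta(K_{\ell/2})=\ell-1-\tfrac{\ell^2}{12}\]
(taken positive part) gives $\mathcal{C}^{-}(\ell)$. When $\ell\ge 2$ we compare two strategies: a single width-$\ell$ centered tent, valid only while $\ell/2<\Delta\to 2^-$ (hence only for $\ell<4$) and yielding $\ell/2-2/(3\ell)$; and a packing of width-$\approx 4$ interior triangles at $\xi=2,4,\dots$ (each contributing $11/6$ in the limit), augmented by one half-height corner triangle whose parameters are optimized in $\{\ell/2\}$, yielding the first branch of the max. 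Taking the larger of the two gives $\mathcal{C}^{-}(\ell)$.

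The main obstacle is the boundary optimization: in each of the four cases above, once the interior tents are fixed, the corner triangles must simultaneously (i) enforce the pointwise inequality \eqref{eq:boundCharac1} or \eqref{eq:boundCharac2}, which becomes a family of linear constraints at the break points of the resulting piecewise-linear sum, and (ii) extremize $\rho_\Delta(g_j)$ (respectively $2g_j(0)-\rho_\Delta(g_j)$). The transition between competing locally-optimal corner configurations occurs at a root of a cubic in $u=\{\ell/2\}$, which is exactly the cubic appearing inside the $(\cdot)_+$. Once the configurations are specified, verification of coverage reduces to comparing two piecewise-linear functions at a finite set of break points, and the final cost computation uses only \eqref{eq:rho_k_delta} together with the limit $\Delta\to 2^-$.
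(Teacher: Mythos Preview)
Your plan is essentially the paper's own approach: invoke Lemma \ref{lem:general}, build majorants/minorants out of tent functions $K_\delta$, and let $\Delta\to 2^-$. Two points of your sketch, however, do not quite match what is needed to obtain the stated $\mathcal{C}^\pm(\ell)$.

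First, for the lower bound when $\ell\ge 2$, a single ``half-height corner triangle'' is not enough. The paper packs $n=\lfloor \ell/\Delta\rfloor-1$ big triangles $\widehat{K_\Delta}$, then one \emph{medium} triangle $\tfrac{\delta_1}{\Delta}\widehat{K_{\delta_1}}$ with $\delta_1=\tfrac{\Delta}{2}(1+\{\ell/\Delta\})$, and \emph{optionally} one further \emph{small} triangle $\tfrac{\delta_2}{\Delta}\widehat{K_{\delta_2}}$ with $\delta_2=\tfrac{\Delta}{2}\{\ell/\Delta\}$, included precisely when its contribution $2K_{\delta_2}(0)-\rho_\Delta(K_{\delta_2})$ is positive. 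It is the inclusion/exclusion of this third triangle that produces the factor $\tfrac{1}{6}\{\ell/2\}(-\{\ell/2\}^2+6\{\ell/2\}-3)_+$ in $\mathcal{C}^-(\ell)$; a configuration with only one corner triangle yields a strictly smaller bound on the range $\{\ell/2\}>3-\sqrt{6}$ and hence does not realize the formula as stated.

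Second, you invoke ``the limit $\Delta\to 2^-$'' without justification. Since Lemma \ref{lem:general} applies only for each fixed $\Delta<2$, one must show that the triangle bounds $C^\pm_\Delta(\ell)$ are \emph{continuous} in $\Delta$ (and in $\ell$) on $(\tfrac{4}{3},2)\times(0,\infty)$, and then pass to the limit inside the $o(1)$. The paper handles this by writing $C^\pm_\Delta(\ell)$ explicitly as piecewise-polynomial functions of $\{\ell/\Delta\}$, checking that the transitions between the pieces (i.e., the roots of $r_\Delta$ and the sign of $w_\Delta$) move continuously with $\Delta$, and only then letting $\Delta\to 2$. This is where the restriction $\Delta\ge\tfrac{4}{3}$ and the separate case analyses (e.g., $\delta\lessgtr 1$ in \eqref{eq:rho_k_delta}) enter; your proposal should incorporate this step explicitly.
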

	%
	\begin{proof}
	We would like to apply Lemma \ref{lem:general} with $\Delta\to2.$ To achieve this, we must obtain continuous bounds for an arbitrary $\Delta\in(1, \, 2)$. For simplicity, we will additionally assume that $\frac{4}{3}\le \Delta <2$ throughout the proof.\vspace{0.3cm}
	
	\noindent \textit{Upper bound.}
	Following the strategy in \cite{CCCM}, we choose $n\ge 0$ large triangles, with two additional small triangles at the beginning and end. In (EP5), we take $N=n+2$, $\widehat{g}_j= K_\Delta$ for $2\le j\le n+1$, and $\widehat{g}_1=\widehat{g}_{N+2}=\frac{\delta}{\Delta}\widehat{K}_\delta$. These are the similar triangles with base $2\Delta$ and height 1, and base $2\delta$ and height $\frac{\delta}{\Delta}$, respectively. Moreover, let
	\begin{equation}\label{eq:l+}
	    \Delta(n-1)+2\delta =\ell, 
	\end{equation}
	and consider the translates given by $\xi_1=0$, $\xi_j = \delta +\Delta(j-2)$ for $2\le j \le n+1$, and $\xi_{n+2}=\Delta(n-1)+2\delta=\ell$. Then, condition \eqref{eq:boundCharac1} is satisfied. We must now choose $n$ and $\delta$ in terms of $\ell$ and $\Delta$, such that $\eqref{eq:l+}$ holds. If $\frac{\ell}{\Delta}\in \N$, we may take $(n\, , \delta) = (\frac{\ell}{\Delta}, \, \frac{\Delta}{2})$. This gives the upper bound 
	\[
	\left(1+\frac{1}{3\Delta^2}\right)\ell + \frac{\Delta^2}{12} +1.
	\]
	If $\frac{\ell}{\Delta}\notin \N$, we have the choices  $(n\, , \delta) = \left(\lfloor \frac{\ell}{\Delta}\rfloor+1, \, \frac{\Delta}{2}\left\{\frac{\ell}{\Delta}\right\}\right)$ or $\left(\lfloor \frac{\ell}{\Delta}\rfloor, \, \frac{\Delta}{2}+\frac{\Delta}{2}\left\{\frac{\ell}{\Delta}\right\}\right)$. Note that the first choice implies $0<\delta < \frac{\Delta}{2}<1$, while the second choice implies $\frac{\Delta}{2}<\delta<\Delta <2.$ We take the minimum of both possibilities, and we must further divide the second choice in cases, depending on whether or not $\delta \ge 1$, to apply \eqref{eq:rho_k_delta}. Note that $\delta\ge 1$ if and only if $\left\{\frac{\ell}{\Delta}\right\}\ge \frac{2}{\Delta}-1$. This yields the upper bound $\mathcal{W}_\Delta^+(\ell)\le C^+_\Delta(\ell)$, where 
	\begin{equation}\label{eq:tri_upper_delta}
	    C^+_\Delta(\ell)  = \left\{
	    \begin{array}{ll}
	    \left(\frac{1}{3 \Delta^2}+1\right) (\Delta+\ell)+p_\Delta(\{\ell/\Delta\}), &\text{if \ }
	    \left\{\frac{\ell}{\Delta}\right\} <\frac{2}{\Delta} -1, \\
	    \left(\frac{1}{3 \Delta^2}+1\right) (\Delta+\ell)
	 +
	 q_\Delta(\{\ell/\Delta\})
	 -r_\Delta(\{\ell/\Delta\})_+, &\text{if \ } 
	  \left\{\frac{\ell}{\Delta}\right\} \ge \frac{2}{\Delta} -1;
	\end{array}
	    \right.
	\end{equation}
	 \[
	 p_{\Delta}(x):=\frac{(x+1) \left(\Delta^3 (x+1)^2-12 \Delta^2+12 \Delta-4\right)}{12 \Delta},
	 \ \ \ \  
	 q_\Delta(x):= \frac{\Delta^2 x^3}{12}+x \left(1-\Delta-\frac{1}{3\Delta}\right)
	 \]
	 and
	 \[
	 r_\Delta(x):= \frac{\Delta^2 x^3}{12}-\frac{\Delta x^2}{2}+(1-\Delta) x+\frac{\Delta}{2}-\frac{1}{3 \Delta}.
	 \]
	 
	 One can verify that, for all $1\le \Delta \le 2$, $r_\Delta(x)$ has a unique root in the interval $(0, \, 1)$, and, if $\Delta\ge \frac{4}{3}$, this root is always greater than $\frac{2}{\Delta}-1$, since $r_\Delta\left(\frac{2}{\Delta}-1\right)>0$ and $r_\Delta(1)<0$. This root denotes the transition between the two choices of $n$ and $\delta$ above. In particular, for $\Delta\ge 4/3$ and $\ell>0$, note that $C^+_\Delta(\ell)$ is a continuous function of $\ell$ and $\Delta$. Therefore, for fixed $\ell$, and separately analyzing the cases $\frac{\ell}{2}\in \N$ and $\frac{\ell}{2}\notin \N$, we may let $\Delta\to2$ in \eqref{eq:tri_upper_delta} and Lemma \ref{lem:general} to obtain the upper bound in Theorem \ref{thm:triangles}, in the case $\ell\ge 1$. The upper bound for $0<\ell <1$ follows from taking $\Delta=1$ in Lemma \ref{lem:general}, and applying directly the bounds for $\mathcal{W}^+_1(\ell)$ in \cite[Theorem 9]{CCCM}.\vspace{0.3cm}
	 
	 \noindent \textit{Lower bound. }
	 If $0<\ell\le 2\Delta$, we may take the single triangle $\widehat{g}_1=\widehat{K}_{\ell/2}$, with $\xi_1=\ell/2$. This gives the lower bound $\mathcal{W}_\Delta^-(\ell)\ge (\ell-1-\frac{\ell^2}{12})_+$ if $\ell\le 2$ (where we have the trivial bound of zero for $\ell\le 6-2\sqrt{6}=1.101\ldots$), and $\mathcal{W}_\Delta^-(\ell)\ge \frac{1}{2}\ell - \frac{2}{3\ell}$ if $2<\ell\le 2\Delta$. Letting $\Delta\to2$, we obtain these same lower bounds in Lemma \ref{lem:general} for any $0<\ell< 4$. 
	 \begin{figure}[t] 
   \centering
   \includegraphics[width=3in]{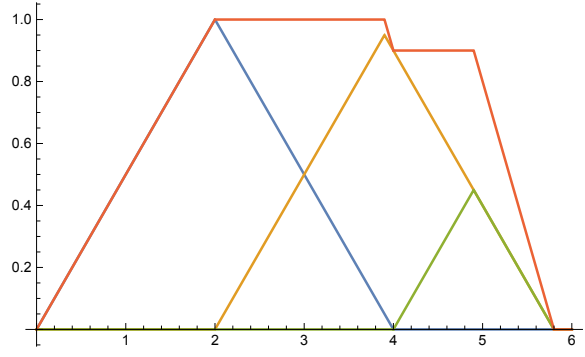}
   \label{fig:triangles}\caption{A superposition of triangles of three different sizes gives a minorant of $\chi_{[0,\,\ell]}$, producing a continuous lower bound. This is the construction for $\ell=5.8$, when $\Delta \to 2^-.$}
\end{figure}
	 
	 \vspace{0.3cm}
	 In the next step, we must diverge slightly from the strategy in \cite{CCCM} to obtain bounds that are continuous, with respect to $\ell$ and $\Delta$. For any $\ell \ge \Delta$, we combine triangles of three different sizes, instead of two as before. First, we take $n\ge 0$ big triangles, followed by one medium triangle, and possibly one last small triangle. 
	 Let $n=\left \lfloor\frac{\ell}{\Delta} \right\rfloor-1 $, $\delta_1 = \frac{\Delta}{2}\left(1+\left\{\frac{\ell}{\Delta}\right\}\right)$, and $\delta_2=\frac{\Delta}{2}\left\{\frac{\ell}{\Delta}\right\}$. Consider the functions $\widehat{g}_j=\widehat{K_2}$ for $1\le j \le n$, $\widehat{g_{n+1}}=\frac{\delta_1}{\Delta}\widehat{K_{\delta_1}}$, and $\widehat{g_{n+2}}=\frac{\delta_2}{\Delta}\widehat{K_{\delta_2}}$. Take $\xi_j=\Delta j$ for $1\le j\le n$, $\xi_{n+1}=\Delta n+\delta_1$, and $\xi_{n+2} = \Delta(n+1)+\delta_2$. The last pair $(\widehat{g_{n+2}}, \, \xi_{n+2})$ is only included when $2K_{\delta_2}(0)-\rho(K_{\delta_2})>0$, that is, when $\left\{\frac{\ell}{\Delta}\right\}>\frac{6-2\sqrt{6}}{\Delta}$. Note that, when $\Delta\ge \frac{4}{3},$ we have $\delta_1\ge \frac{\Delta}{2}\ge \frac{2}{3}$, so that $2K_{\delta_1}(0)-\rho(K_{\delta_1})$ is always positive in this range.  Additionally, note that $\ell = \Delta n +2\delta_1= \Delta(n+1)+2\delta_2$, and \eqref{eq:boundCharac2} is satisfied. 
	 
	\noindent The above configuration, in (EP6), yields $\mathcal{W}_\Delta^-(\ell)\ge C^-_\Delta(\ell)$, where 
		\begin{equation}\label{eq:tri_lower_delta}
	    C^-_\Delta(\ell)  = \left\{
	    \begin{array}{ll}
	    \left(\frac{1}{3 \Delta^2}+1\right) (\Delta+\ell)+u_\Delta(\{\ell/\Delta\}), &\text{if \ }
	    \left\{\frac{\ell}{\Delta}\right\} <\frac{2}{\Delta} -1, \\
	    \left(\frac{1}{3 \Delta^2}+1\right) (\Delta+\ell)
	 +
	 v_\Delta(\{\ell/\Delta\})+w_\Delta(\{\ell/\Delta\})_+, &\text{if \ } 
	  \left\{\frac{\ell}{\Delta}\right\} \ge \frac{2}{\Delta} -1;
	\end{array}
	    \right.
	\end{equation}
	\begin{equation*}
	    u_\Delta(x) = \frac{-\frac{1}{4} \Delta^3 (x+1)^3+3 \Delta^2 \left(x^2+1\right)-3 \Delta (x+1)+2 x}{6 \Delta};
	\end{equation*}
	\begin{equation*}
	    v_\Delta(x) = \frac{(x-1) \left(3 \Delta^2 (x-1)+4\right)}{12 \Delta};
	    \ \ \text{ and } \ \ 
	    w_\Delta(x) = \frac{6 \Delta x  \left(-\frac{1}{12} \Delta^2 x^2+\Delta x-1\right)}{12 \Delta}.
	\end{equation*}
	Note that, for $0\le x\le 1$ and $1\le \Delta \le 2$, $w_\Delta(x)>0$ if and only if $x>\frac{6-2\sqrt{6}}{\Delta}$. Moreover, we have that $0\le \frac{2}{\Delta}-1<\frac{6-2\sqrt{6}}{\Delta} <1$. In particular, $C^-_\Delta(\ell)$ is a continuous function of $\ell$ and $\Delta$, and we may take $\Delta\to 2$ to obtain the lower bounds in Theorem \ref{thm:triangles}. In the lower bound for $\ell>2$, the maximum is attained by the second function for $2<\ell\le \ell_1$, and by the first function for $\ell >\ell_1$, where $\ell_1=3.609\ldots$
	%
	\end{proof}

	 

	\begin{remark}
	    An important technical feature of the bounds in Theorem \ref{thm:triangles} is their continuity, which helps to take $\Delta\to 2$. To achieve this continuity for an arbitrary $\Delta\in (4/3,\, 2)$, we must take precise configurations of triangles, slightly different from those considered in \cite{CCCM}, and take care with the cases that arise depending on the size of $\Delta$. As $\ell\to\infty$, it is clearly convenient to take $\Delta$ as large as possible, as we have the multiplying factors $(1\pm\frac{1}{3\Delta^2})$. However, for some fixed values of $\ell$, one could do slightly better than stated in Theorem \ref{thm:triangles}, by using the general bounds in \eqref{eq:tri_upper_delta} and \eqref{eq:tri_lower_delta} and choosing an optimal $\Delta$ in $(4/3, \, 2)$.
	\end{remark} 
	\subsection{Asymptotic bounds}
	Recall that 
	\begin{equation*}
	   \mathbf{D}^+:= \inf_{g\in\mathcal{A}_1\setminus\{0\}}\frac{\widehat{g}(0)+8\int_0^{1/2} \alpha\, \widehat{g}(\alpha)\,\d \alpha +4\int_{1/2}^1 \widehat{g}(\alpha)\,\d \alpha}{2\min_{0\le \alpha\le 1}\left\lvert \widehat{g}(\alpha)+\widehat{g}(1-\alpha)\right \rvert}
	\end{equation*}
	and 
\begin{equation*}
	   \mathbf{D}^-:= \sup_{\substack{g\in\mathcal{A}_1
	   \\ g(0)>0}}
	   \frac{(1-c_0)g(0) + 
	   \frac{c_0}{2}\left[\widehat{g}(0)+8\int_0^{1/2} \alpha\, \widehat{g}(\alpha)\,\d \alpha +4\int_{1/2}^1 \widehat{g}(\alpha)\,\d \alpha\right]}
	   {\max_{0\le \alpha\le 1}\left(|\widehat{g}(\alpha)|+|\widehat{g}(1-\alpha)|\right)}.
	\end{equation*}	
	In this section, we begin the proof of Theorem \ref{thm:q-ana_asymp}, by connecting the integrals of $F_\Phi(\alpha)$ to the above extremal problems. The main idea is to consider, in (EP5) and (EP7), copies of a single function $\widehat{g}$, instead of a triangle, so that we may then optimize over admissible functions.
	\begin{lemma}\label{lem:asymptotic}
	    Assume GRH, and let $b\ge 1$. For sufficiently large fixed $\ell$, as $Q\to\infty$, we have 
	\begin{equation*}
	    \mathbf{D}^- -\varepsilon  + o(1) < \frac{1}{\ell}\int_b^{b+\ell}F_\Phi(\alpha, \, Q)\,\d \alpha < \mathbf{D}^+ +\varepsilon  + o(1).
	\end{equation*}
	\end{lemma}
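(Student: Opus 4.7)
The plan is to apply Lemma~\ref{lem:general} with $\Delta \to 2^-$, choosing in problems (EP5) and (EP7) configurations consisting of equispaced translates of a single, appropriately rescaled function built from a near-optimal $g \in \mathcal{A}_1$ for $\mathbf{D}^+$ (respectively $\mathbf{D}^-$). The key rescaling is $G(x) := 2g(2x)$, so that $G \in \mathcal{A}_2$ with $\widehat{G}(\alpha) = \widehat{g}(\alpha/2)$ supported in $[-2,2]$. A change of variables gives, as $\Delta \to 2^-$,
\[
\rho_\Delta(G) \longrightarrow \widehat{g}(0) + 8\int_0^{1/2}\alpha\,\widehat{g}(\alpha)\,d\alpha + 4\int_{1/2}^{1}\widehat{g}(\alpha)\,d\alpha,
\]
which matches the numerator appearing in $\mathbf{D}^\pm$, while $G(0) = 2g(0)$ produces the factor paired with $(1-c_0)$ in $\mathbf{D}^-$. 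Since $\widehat{G}$ has half-width $2$, roughly $\ell/2$ translates cover $[0,\ell]$, and the resulting ratio $N/\ell \to 1/2$ furnishes the factor of $2$ in the denominator of $\mathbf{D}^+$ and the weighting $\frac{c_0}{2}$ in $\mathbf{D}^-$.

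For the upper bound, fix $\varepsilon > 0$ and take $g \in \mathcal{A}_1 \setminus \{0\}$ near-optimal for $\mathbf{D}^+$ with $M := \min_{\alpha\in[0,1]}|\widehat{g}(\alpha) + \widehat{g}(1-\alpha)| > 0$. In (EP5), take $N = \lceil \ell/2 \rceil + 1$ copies of $\widehat{g}_j := \widehat{G}/M$ at translates $\xi_j := 2(j-1)$. On each slab $[\xi_j, \xi_{j+1}]$, substituting $\alpha = \xi_j + 2\beta$ gives $(\widehat{G}(\alpha-\xi_j) + \widehat{G}(\alpha-\xi_{j+1}))/M = (\widehat{g}(\beta)+\widehat{g}(1-\beta))/M \ge 1$, so the collection majorizes $\chi_{[0,\ell]}$ on the interior. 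Any sign issues at the two end flaps outside $[0,\ell]$ (arising if $\widehat{g}$ takes negative values there) can be handled with $O_g(1)$ small auxiliary triangles, as in the proof of Theorem~\ref{thm:triangles}. Hence $\mathcal{W}_\Delta^+(\ell) \le N \rho_\Delta(G)/M + O_g(1)$; dividing by $\ell$, letting $\Delta \to 2^-$ and $\ell \to \infty$, and optimizing over $g$ yields the upper bound $\mathbf{D}^+ + \varepsilon$.

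For the lower bound, pick $g \in \mathcal{A}_1$ near-optimal for $\mathbf{D}^-$ and set $M' := \max_{\alpha\in[0,1]}(|\widehat{g}(\alpha)| + |\widehat{g}(1-\alpha)|)$. In (EP7), take $N = \lfloor (\ell-4)/2 \rfloor + 1$ copies of $\widehat{g}_j := \widehat{G}/M'$ at translates $\eta_j := b + 2 + (j-1)h$ with $h := (\ell-4)/(N-1) \approx 2$, so the supports lie in $[b, b+\ell]$ and the overlap estimate gives the minorant condition $\sum_j \widehat{g}_j(\alpha - \eta_j) \le \chi_{[b, b+\ell]}(\alpha)$. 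Set $\tau_j := c_0$. Since $g_j \propto G \ge 0$, the inequality~\eqref{eq:tau_cond} reduces to the pointwise bound
\[
\frac{1}{N}\re\sum_{j=1}^N e^{2\pi i \eta_j x} \ = \ \frac{\sin(N\pi hx)}{N \sin(\pi hx)}\,\cos(\pi K x) \ \ge \ c_0 + o(1) \quad (N \to \infty),
\]
with $K := 2(b+2) + (N-1)h = 2b + \ell$, satisfying $K/(Nh) \to 1$ as $\ell \to \infty$. In the relevant small-$hx$ regime where $\sin(\pi hx) \approx \pi hx$ and $\cos(\pi K x) \approx \cos(N\pi hx)$, the expression approaches $\sin(2\pi N h x)/(2\pi N h x)$, whose minimum over $x \ne 0$ is precisely $c_0 = \min_{x\ne 0}\sin(x)/x$. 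The resulting objective in~\eqref{eq:ep7} equals $(N/M')[(1-c_0)\cdot 2g(0) + c_0\,\rho_\Delta(G)]$; dividing by $\ell$ with $N/\ell \to 1/2$, sending $\Delta \to 2^-$, and optimizing over $g$ yields the lower bound $\mathbf{D}^- - \varepsilon$.

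The main technical obstacle is the asymptotic verification of~\eqref{eq:tau_cond}: because the $\eta_j$'s lie on one side of the origin (inside $[b+2, b+\ell-2]$), the cosine factor does not automatically cancel. The specific arrangement chosen ensures $K \approx Nh$, precisely aligning the cosine with the Dirichlet kernel in the extremal $\sin(z)/z$ configuration; this is also why $\ell$ must be taken large relative to $b$. This mirrors the construction from~\cite[Proof of Theorem~1]{CCCM} for the analogous $\Delta = 1$ case, with the rescaling $G(x) = 2g(2x)$ being the only substantive new ingredient here.
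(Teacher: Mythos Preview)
Your upper-bound argument is essentially the same as the paper's: dilate a near-optimal $g\in\mathcal A_1$ to $g_\Delta\in\mathcal A_\Delta$, tile $[0,\ell]$ with translates at spacing $\Delta$, and send $\Delta\to2^-$. That part is fine.

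The lower bound, however, has a genuine gap. Your verification of \eqref{eq:tau_cond} with $\tau_j=c_0$ does not hold. With your choice of translates the quantity to bound below is
\[
\frac{1}{N}\,\mathrm{Re}\sum_{j=1}^{N}e^{2\pi i\eta_j x}
=\frac{\sin(N\pi hx)}{N\sin(\pi hx)}\cos(\pi Kx),
\]
and you only analyze the regime $|hx|\ll1$, where indeed $\sin(\pi hx)\approx\pi hx$ and the product is close to $\sin(2\pi Nhx)/(2\pi Nhx)\ge c_0$. But the inequality must hold for \emph{all} $x\in\R$. At $x=k/h$ with $k\in\Z\setminus\{0\}$ the Fej\'er--Dirichlet factor has modulus $1$ (by L'H\^opital, $\lim_{x\to k/h}\frac{\sin(N\pi hx)}{N\sin(\pi hx)}=\pm1$), while $\cos(\pi Kk/h)$ is uncontrolled; generically the product reaches values arbitrarily close to $-1$. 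Even in the idealized case $K=Nh$ you invoke, one computes
\[
\frac{\sin(N\pi hx)}{N\sin(\pi hx)}\cos(N\pi hx)=\frac{\sin(2N\pi hx)}{2N\sin(\pi hx)},
\]
which at $x=1/h$ (i.e.\ $\pi hx=\pi$) equals $-1$, not $c_0$. Since a nonzero $g\in\mathcal A_1$ is entire and cannot vanish on a neighborhood of $x=1/h$, there is no way to rescue the inequality by the support of $g$.

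The paper (following \cite{CCCM}) avoids this obstruction by first writing
\[
\int_b^{b+\ell}F_\Phi=\tfrac12\int_{-\beta}^{\beta}F_\Phi-\int_0^{b}F_\Phi,\qquad \beta=b+\ell,
\]
so that one may apply (EP7) on the \emph{symmetric} interval $[-\beta,\beta]$ with translates $\eta_j=\Delta(n-j)$, $1\le j\le 2n-1$, centered at the origin. Then $\sum_j e^{2\pi i\eta_j x}=D_{n-1}(2\pi\Delta x)$ is the (real) Dirichlet kernel, and one sets $\tau_j=\mathfrak m(n-1)/(2n-1)$ with $\mathfrak m(n)=\min D_n$; the known fact $\mathfrak m(n)/n\to 2c_0$ yields $\tau_j\to c_0$. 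The term $\int_0^b F_\Phi$ is $O_b(1)$ and absorbed into the error. It is precisely this symmetrization that kills the cosine modulation you struggle with; your claim that the one-sided construction ``mirrors'' \cite{CCCM} is not accurate.
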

	
	\noindent\textit{Proof.} Throughout the proof, let $g\in\mathcal{A}_1$, so that $\supp \widehat{g}\subset[-1,1]$. For $1< \Delta< 2$, consider the dilation $g_\Delta(\alpha)$=$\Delta g(\Delta \alpha)$, so that $g_\Delta\in \mathcal{A}_\Delta$. Again, we must first obtain bounds for an arbitrary $\Delta\in (1, \, 2)$, with the goal of taking $\Delta\to 2^-$ at the end.
	
	\subsubsection{Upper bound}
	Assume that $\min_{0\le \alpha\le 1}\left\lvert \widehat{g}(\alpha)+\widehat{g}(1-\alpha)\right\rvert \neq 0 $. Then, since $\widehat{g}(0)>0$ and $\widehat{g}$ is continuous, we must have $\widehat{g}(\alpha)+\widehat{g}(1-\alpha)>0$ for all $\alpha\in [0,\, 1]$, and by multiplying by an appropriate constant, we may assume that 
	\begin{equation}\label{eq:min1}
	\min_{0\le \alpha\le 1} \left(\widehat{g}(\alpha)+\widehat{g}(1-\alpha)\right) = 1.
	\end{equation}
	
	 \noindent In (EP5), given $\ell$, let $N=\left\lceil\frac{\ell}{\Delta}\right\rceil +1$. Consider the $N$ functions $\widehat{g_j}=\widehat{g_\Delta}$, for $1\le j\le N$, and take the translates $\xi_j=\Delta(j-1)$. Then, by \eqref{eq:min1}, the fact that $\supp \widehat{g}\in [-1,1]$, and that $\widehat{g}$ is even, we obtain
	\begin{equation}\label{eq:sum_g}
	\begin{split}
	    \sum_{j=1}^N\widehat{g_j}(\alpha-\xi_j) &= \sum_{j=1}^N \widehat{g}\left(\frac{\alpha}{\Delta}-j+1\right)\\
	    &= \widehat{g}\left(\left\{\frac{\alpha}{\Delta}\right\}\right)
	    + \widehat{g}\left(1-\left\{\frac{\alpha}{\Delta}\right\}\right)\\
	    &\ge 1,
	\end{split}
	\end{equation}
	for all $0\le \alpha \le \Delta \left\lceil\frac{\ell}{\Delta}\right\rceil$, in particular for $0\le \alpha \le \ell$. By an argument of Carneiro, Chandee, Chirre, and Milinovich (adding a finite number of triangles if necessary, see \cite[p. 18]{CCCM}) we may assume that this sum is non-negative for all $\alpha$, and therefore \eqref{eq:boundCharac1} is satisfied. This gives the bound
	\[
	\mathcal{W^+}_\Delta(\ell) \le \frac{\rho_\Delta (g_\Delta)}{\Delta}\, \ell + O(1),
	\]
	where the implied constant may depend on $g$, but not on $\ell$ or $\Delta$. Note that the function 
	\[\Delta \mapsto  \frac{\rho_\Delta(g_\Delta)}{\Delta} =
	\frac{1}{\Delta}\left(\widehat{g}(0)+2\Delta^2\int_0^\frac{1}{\Delta} \alpha\, \widehat{g}(\alpha)\,\d \alpha +2\Delta\int_\frac{1}{\Delta}^1 \widehat{g}(\alpha)\,\d \alpha\right)
	\]
	is continuous for $1\le \Delta \le 2.$ Then, we may take $\Delta\to 2$ in Lemma \ref{lem:general} with the above bound, to obtain, for any fixed $\varepsilon>0$ and $\ell$ sufficiently large, 
	\[
	\int_b^{b+\ell} F_\Phi(\alpha, \, Q)\, \d \alpha \le \ell\, (\mathbf{D}^+  +\varepsilon) + o(1),
	\]
	as $Q\to\infty$. This proves the upper bound in Lemma \ref{lem:asymptotic}.
	\subsubsection{Lower bound}\label{sec:asymptotic-lower}
	Here, we will use the framework of (EP7). We may assume, without loss of generality, that $g(0)>0$, and that $\max_{0\le \alpha\le 1} |\widehat{g}(\alpha)|+|\widehat{g}(1-\alpha)|=1$. For a fixed $b\ge1$ and large $\ell$, let $\beta = b + \ell$, and write
	\begin{align}\label{eq:decomposing}	
	\int_b^{b+\ell} F_\Phi(\alpha, \, Q)\, \d \alpha &= 	1/2\int_{-\beta}^{\beta}F_\Phi(\alpha, \, Q)\, \d \alpha -  
	\int_{0}^{b}F_\Phi(\alpha, \, Q)\, \d \alpha 
.
	\end{align}
	Let $n=\left\lfloor\frac{\beta}{\Delta}\right\rfloor$. In (EP7), let $N=2n-1$, and take $\widehat{g_j}=\widehat{g_\Delta}$ and $\eta_j=\Delta(n-j)$, for $1\le j\le N$. Define
	\[
	\mathfrak{m}(n)=\min_{x\in \R} D_n(x),
	\]
	where
	\[D_n(x)=\sum_{k=-n}^ne^{ikx} = \frac{\sin((n+1/2)x)}{\sin(x/2)}
	\]
	is the Dirichlet kernel. From \cite[Equation (2.37)]{CCCM}, it is known that 
	\begin{equation}\label{eq:c0_limit}
	    \lim_{n\to\infty} \frac{\mathfrak{m}(n)}{n} = 2c_0, \ \textrm{ and moreover, }\ \left|\frac{\mathfrak{m}(n)}{n} - 2c_0\right| \ll \frac{1}{n},
	\end{equation}
	where $c_0$ is defined in \eqref{eq:c0}.
	Let 
	\[
	\tau_j = \inf_{
	\substack{x\in\R \\
	g(x)\neq 0}
	} \frac{g(x)\re \left(\sum_{j=1}^{2n-1} e^{2\pi i \Delta(n-j)x} 
	\right)}{(2n-1)g(x)
	}
	= \frac{\mathfrak{m}(n-1)}{2n-1}.
	\]
	Then, \eqref{eq:tau_cond} is automatically satisfied, and we can verify \eqref{eq:boundCharac3} (where $b=-\beta$). 
	This gives the bound
	\[\mathcal{W}_{*, \, \Delta}^-(-\beta, \, \beta)\ge 
	(2n-1)\left(\Delta g(0) -\frac{\mathfrak{m}(n-1)}{2n-1}(\rho_\Delta(g_\Delta) - \Delta g(0))\right).
	\]
	This implies, by \eqref{eq:c0_limit}, that
	\[\frac{\mathcal{W}_{*, \, \Delta}^-(-\beta, \, \beta)}{2}\ge
	\frac{\beta}{\Delta}\, (\Delta g(0)(1-c_0) +c_0\, \rho_\Delta (g_\Delta) ) -O(1),
	\]
	where the implied constant may depend on $g$ but not on $\Delta$ or $\beta$ (note that $\rho_\Delta(g_\Delta)$ can be bounded in terms of $g$, uniformly in $\Delta$). Now, we apply Lemma \ref{lem:general} and \eqref{eq:decomposing}, and let $\Delta\to 2$ as before. For any fixed $\varepsilon>0$ and $b\ge 1$, we obtain, for sufficiently large fixed $\ell$, that
	\[	\int_b^{b+\ell} F_\Phi(\alpha, \, Q)\, \d \alpha \ge \ell\, (\mathbf{D^-}-\varepsilon) +o(1), 
	\]
	as $Q\to\infty,$ as desired. \qed
\subsection{$\Gamma_1(q)$-analogues: an average over automorphic $L$-functions}\label{sec:gamma1}
	In this section, for the convenience of the reader, we briefly define the $\Gamma_1(q)$-analogue of $F(\alpha)$, and show how it also satisfies the conclusions of Theorem \ref{thm:q-ana_asymp} and Theorem \ref{thm:triangles}. This is the framework of Chandee, Klinger-Logan and Li in \cite{CKL}, to which we refer for more details. 
	The authors consider a large family of $GL(2)$ $L$-functions, as follows. Let $k$ and $q$ be positive integers, with $k\ge 3$. Consider the subgroups of $GL_2(\Z)$
	\[\Gamma_0(q)=\left\{ 
	\begin{pmatrix}
a & b \\
c & d
\end{pmatrix}:\, ad-bc=1, \, c\equiv 0\, (\text{mod}\, q)   
	\right\} ,\ \text{and}\  \Gamma_1(q)=\left\{ 
	\begin{pmatrix}
a & b \\
c & d
\end{pmatrix}\in \Gamma_0(q): \, a\equiv d\equiv 1\, (\text{mod} \, q )
	\right\}.
	\]
	Let $S_k(\Gamma_0(q), \, \chi)$ be the space of cusp forms of weight $k\ge 3$ for $\Gamma_0(q)$ and nebentypus character $\chi$ (mod $q$). Let $\mathcal{H}_\chi\subset S_k(\Gamma_0(q), \, \chi)$ be an orthogonal basis of $S_k(\Gamma_0(q),\, \chi)$ consisting of Hecke cusp forms, normalized so that the first Fourier coefficient is 1. It is known that each $f\in \mathcal{H}_\chi$ has an associated $L$-function $L(s,\, f)$. Assume GRH for all the $L(s,\, f)$ and for all Dirichlet $L$-functions. Then, we define the $\Gamma_1(q)$-analogue of $F(\alpha)$ as
	\[
	F_\Phi^*(\alpha, \, q):= \frac{2\,\Gamma(k-1)}{N_\Phi^*(q)\, \phi(q)\, (4\pi)^{k-1}} \sum_{\substack{\chi \ (\text{mod}\ q)\\
	\chi(-1)=(-1)^k
	}
	}\sum_{f\in\mathcal{H}_\chi}\frac{1}{\|f\|^2}
	\left|\sum_{\gamma_f} \widetilde{\Phi}(i\gamma_f)\,q^{i\gamma_f\alpha}
	\right|^2, 
	\]
	where
	\[
	N_\Phi^*(q):= \frac{2\,\Gamma(k-1)}{\phi(q)\, (4\pi)^{k-1}} \sum_{\substack{\chi \ (\text{mod}\ q)\\
	\chi(-1)=(-1)^k
	}
	}\sum_{f\in\mathcal{H}_\chi}\frac{1}{\|f\|^2}\sum_{\gamma_f}\left|\widetilde{\Phi}(i\gamma_f)
	\right|^2,
	\]
	and the inner sums run over the ordinates of all non-trivial zeros $\frac{1}{2}+i\gamma_f$ of $L(s,\, f)$. Note that 
	\begin{equation*}
	    S_k(\Gamma_1(q)) = \bigoplus_{\chi\,(\text{mod}\ q) }S_k(\Gamma_0(q),\chi),
	\end{equation*}
	where $S_k(\Gamma_1(q))$ is the space of holomorphic cusp forms for $\Gamma_1(q)$. Therefore, we may think of $F_\Phi^*(\alpha, \, q)$ as the $\Gamma_1(q)$-analogue of $F(\alpha).$ \smallskip
	
	In \cite[Theorem 1.1]{CKL}, the authors show that the same asymptotic formula in Lemma \ref{lem:F_phi} holds, with $F_\Phi^*(\alpha, \, q)$ replacing $F_\Phi(\alpha, \, Q)$, as $q\to\infty$. Fourier inversion yields, as in \eqref{eq:convolution}, 
	\[\frac{2\,\Gamma(k-1)}{\phi(q)\, (4\pi)^{k-1}} \!\!\sum_{\substack{\chi \ (\text{mod}\ q)\\
	\chi(-1)=(-1)^k
	}
	}\sum_{f\in\mathcal{H}_\chi}\frac{1}{\|f\|^2}
	\sum_{\gamma_f,\, \gamma_f'}
	R\left(\frac{(\gamma_f-\gamma_f')\log q}{2\pi}\right)
	\widetilde{\Phi}(i\gamma_f)\widetilde{\Phi}(i\gamma_f')
	= N_\Phi^*(q)\int_\R F^*_\Phi(\alpha,\,q)\, \widehat{R}(\alpha)\,\d \alpha.
	\]
	Then, the same argument in the proof of Lemma \ref{lem:general} shows that, for any fixed $b\ge 1$, $\ell >b$, and $1\le\Delta<2$, 
	\[\mathcal{W}^-_{*,\, \Delta}(b,\, b+\ell) + o(1) \le \int_b^{b+\ell} F_\Phi^*(\alpha, \, q) \, \d \alpha \le \mathcal{W}_\Delta^+(\ell) +o(1),\] as $q\to \infty$. The bounds for $\mathcal{W}_\Delta^\pm(\ell)$ and $\mathcal{W}_{*,\, \Delta}^-(\ell)$, given in the proofs of Theorem \ref{thm:q-ana_asymp} and Theorem \ref{thm:triangles}, now immediately imply the analogous theorems for $F^*_\Phi(\alpha,\, q)$, with the same constants.
	%
	\section{Numerically optimizing the bounds}\label{sec:numerical}
We must optimize the functionals given in (EP1)-(EP4), over functions in the class $\mathcal{A}_1$. First, we transform these optimization problems over $\mathcal{A}_1$ into unrestricted optimization problems over $\R^{d+1}$, where $d\in \N$. By a result of Krein \cite[p. 154]{Ach}, if $g\in \mathcal{A}_1$, then $g(x)=|h(x)|^2$, for some $h\in L^2(\R)$ with $\supp \widehat{h}\subset [-\frac{1}{2}, \frac{1}{2}]$. We may then search over functions of the form $\widehat{h}(x)=p(x)\chi_{[-\frac{1}{2}, \frac{1}{2}]}$, where 
\[p(x):=\sum_{i=0}^d a_ix^i\]
is a polynomial of degree $d$. The numerators in (EP1)-(EP4) are now bilinear forms 
\[\sum_{i,\, j = 0}^d c_{ij}a_ia_j
\]
in the coefficients of $p$. To implement these bilinear forms, one may compute the values of $c_{ij}$ by numerically evaluating the numerators of the functionals on the polynomials $p_{ij}(x):= x^i + x^j$, for $0\le i, \, j\le d$. The maxima and minima in the denominators in (EP1)-(EP4) may be computed via a simple 1-dimensional optimization routine.

We proceed to optimize over the coefficients $a_i$ via the principal axis method of Brent \cite{Br}. This is an iterative algorithm without derivatives, which requires two initial values for all coefficients $a_i$. We take $d\le 12$, run the algorithm with many different randomly-chosen initializations, and additionally run it with initializations found previously from running this procedure with lower degrees. In this way, we found the following functions:
\[p_1(x) = 200 x^{12}+815 x^{10}-152 x^8-59 x^6+\frac{69 x^4}{10}-\frac{157 x^2}{1000}+1,
\]
which shows $\mathbf{D}^+ < 1.077542$ in (EP3); and
\[
p_2(x)= -x^2+5,
\]
which shows $\mathbf{D}^- > 0.982144$ in (EP4). This proves\footnote
{\,\,\,The numerators in the functionals can be computed exactly in rational arithmetic, in terms of $c_0$. The maximum and minimum in the denominators, and the value of $c_0$ given in \eqref{eq:c0}, may be easily verified to the desired precision, for instance by first isolating the critical points and then applying the bisection method, using interval arithmetic. }
Theorem \ref{thm:q-ana_asymp}. We also found
\[p_3(x) = -3855 x^{12}+2203 x^{10}-\frac{2743 x^8}{10}-\frac{152 x^6}{5}+\frac{303 x^4}{100}-\frac{7 x^2}{250}+1,
\]
which shows $\mathbf{C}^+ < 1.330144$ in (EP1); and
\[p_2(x) = -x^2 +\frac{250}{47},
\]
which shows $\mathbf{C}^- > 0.927819$ in (EP2). We also ran this routine with $d=14$, and found no improvement in the first six decimal digits with respect to the above functions.
\begin{figure}[t] 
   \centering
   \includegraphics[width=3in]{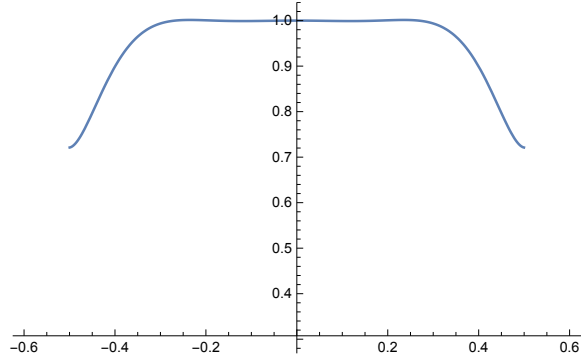}
   \label{fig:p1}\caption{The function $\widehat{h}(x)=p_1(x)\chi_{[-1/2,\,1/2]}$ is a perturbation of $\chi_{[-1/2,\,1/2]}$.}
\end{figure}

	\subsection{Remarks on a larger class of functions}\label{sec:A_LP}
	Let $\mathcal{A}$ be the class of continuous, even, and non-negative functions $g\in L^1(\R)$, such that $\widehat{g}(\alpha)\le 0$ for $|\alpha|\ge 1$. Note that $\mathcal{A}_1\subset\mathcal{A}$. Cohn and Elkies \cite{CE} first used this class $\mathcal{A}$ to obtain upper bounds for the sphere packing problem. Recently, Chirre, Gon\c{c}alves, and de Laat \cite{CGL} also used it to sharpen bounds in the theory of the Riemann zeta and other $L$-functions. With this more general framework, the problems considered in \cite{CGL} are reduced to convex optimization problems, which the authors solve numerically via semidefinite programming (see \cite{semi} for background on semidefinite programming). Furthermore, Chirre, Pereira, and de Laat \cite{CPL} used a similar framework, with semidefinite programming, to obtain fine estimates for primes in arithmetic progressions, following a Fourier optimization approach by Carneiro, Milinovich, and Soundararajan \cite{CMS}. In all these works, the authors use these numerical techniques to construct test functions of the form $g(x)=p(x)e^{-\pi x^2}$, where $p$ is a polynomial of a certain degree. 
	
	In \cite{CCCM}, the authors also build their theoretical framework using this larger class $\mathcal{A}$, while working with the simpler class $\mathcal{A}_1$ to obtain their bounds. We explored the optimizations problems with this larger class, with the purpose of refining Theorem A and Theorem \ref{thm:q-ana_asymp}, using the semidefinite programming methods described in \cite{CGL}. However, this did not lead to any improvement over the results obtained with bandlimited functions in $\mathcal{A}_1$, even after using polynomials of large degrees, and significantly larger than the degree used in \cite{CGL}. A similar situation occurred in \cite{CQ-H}, where the aforementioned results in \cite{CMS} and \cite{CPL} were further extended to primes represented by quadratic forms. Therein, bandlimited functions also outperform polynomials times gaussians, unless one uses much larger degrees, which might not be feasible. 
	
	Nevertheless, for completeness, we will briefly describe how to construct these functions with semidefinite programming in the present framework, and the results obtained. Henceforth, assume GRH. In \cite{GGOS}, the authors show that, for any fixed, small $\delta>0$, we have
	\begin{equation}\label{eq:F-grh}
	    F(\alpha, \, T) \ge \frac{3}{2} - |\alpha| - o(1),
	\end{equation}
	uniformly for $1\le |\alpha|\le \frac{3}{2}-\delta$, as $T\to\infty.$ This gives a conditional improvement over the asymptotic formula \eqref{eq:F_asymp}, and has been used to refine some estimates under GRH (e.g. in \cite{CGL,GGOS}). Using \eqref{eq:F-grh}, by an argument similar to that of Section \ref{sec:asymptotic-lower} and \cite[Section 2.4]{CCCM}, we find the following: for fixed $b\ge 1$ and $\ell$ sufficiently large,
	\begin{equation*}
	    \frac{1}{\ell}\int_b^{b+\ell} F(\alpha, \, T)\, \d \alpha \ge J_1(g)-\varepsilon +o(1), 
	\end{equation*}
	as $T\to\infty$, and
	\begin{equation*}
	    \frac{1}{\ell}\int_b^{b+\ell} F_\Phi(\alpha, \, Q)\, \d \alpha \ge J_2(g)-\varepsilon +o(1), 
	\end{equation*}
	as $Q\to\infty$, for any $g\in \mathcal{A}$. Here, we denote
\begin{equation}\label{eq:semi-j1}
    J_1(g) := \frac{(1-c_0)g(0) + c_0\left(\rho_1(g) + 2\int_1^{3/2}\left(\frac{3}{2}-\alpha \right)\widehat{g}(\alpha)\,\d \alpha
    \right)}{\max_{0\le \alpha \le 1}\sum_{n=0}^m|\widehat{g}(n-\alpha)|},
\end{equation}
and 
\begin{equation}\label{eq:semi-j2}
    J_2(g) := \frac{
    (1-c_0)g(0) + 
	   \frac{c_0}{2}\left(\widehat{g}(0)+8\int_0^{1/2} \alpha\, \widehat{g}(\alpha)\,\d \alpha +4\int_{1/2}^1 \widehat{g}(\alpha)\,\d \alpha\right) 
    }
    {\max_{0\le \alpha \le 1}\sum_{n=0}^m|\widehat{g}(n-\alpha)|}.
\end{equation}
We may take the parameter $m$ to be any positive integer, and, as in \cite[Section 2.4.2]{CCCM}, the bounds improve as $m\to\infty$. Note that, if $g\in \mathcal{A}_1$, then $J_1$ and $J_2$ simplify to the functionals in (EP2) and (EP4), respectively. Furthermore, note that, since \eqref{eq:F-grh} is an inequality (instead of an asymptotic equality as in \eqref{eq:F_asymp} and Lemma \ref{lem:F_phi}), we used the sign restrictions of $g\in\mathcal{A}$ to obtain the above bound with $\eqref{eq:semi-j1}$.

\smallskip In contrast to \cite{CGL}, the objectives $J_1$ and $J_2$ are not linear (or even smooth). To transform it into a semidefinite program, we approximate our problem by one with a linear objective and additional linear inequality constraints. Let $N$ be a positive integer, and let $\{\alpha_1, \, \alpha_2,\, \ldots, \, \alpha_N\}$ be a partition of the interval $[0,\, 1]$. Then, multiplying $g$ by an appropriate constant, we may replace the denominators of \eqref{eq:semi-j1} and \eqref{eq:semi-j2} by 1, and incorporate the system of inequality constraints 
\[\sum_{n=0}^m\widehat{g}(n-\alpha_j )\le 1,\ \text{ for } \ 1\le j \le N.\] 
When $N$ is sufficiently large, this results in a reasonable approximation in practice.

\smallbreak We now follow the notation and argument in \cite[Section 4]{CGL}, to which we refer for details. By taking dilations, we may relax the condition $\widehat{g}\le 0$ for $|\alpha|\ge 1$, to the same condition over $|\alpha|\ge R$, where $R\ge 1$ is some parameter (after also taking dilations in the definitions of $J_1$ and $J_2$), and we may assume $\widehat{g}(0)=1.$  We see this as a bilevel optimization problem, where the outer problem is a 1-dimensional problem over $R\ge 1$, and the inner problem optimizes over such a function $g(x)=p(x)e^{-\pi x^2}$, where, as before, $p$ is an even polynomial. For a fixed $R$, functions $g$ of this form, that are non-positive in $[R,\, \infty)$, and whose Fourier transform is non-negative, can be written in terms of positive-semidefinite matrices, as follows. \smallskip

Let $d\in \N$. Let $X_2, \, X_3,\, X_4$ be positive-semidefinite matrices of size $(d+1)$, and let 
\[v(u):=\left(L_0^{-1/2}(\pi u), \, \ldots,\, L_d^{-1/2}(\pi u)
\right)\in\R^{d+1},\] 
where $L_k^{-1/2}$ is the Laguerre polynomial of degree $k$ with parameter -1/2. Then, we may write 
\[
g(x) = (R^2-x^2)\, v(x^2)^TX_2\, v(x^2)\, e^{-\pi x^2},\ \text{ and } \ 
\widehat{g}(x)=\left( v(x^2)^TX_3\, v(x^2) + x^2\, v(x^2)^TX_4\, v(x^2)\right)e^{-\pi x^2} .
\]
Note that $g$ is a polynomial of degree $4d+2$, times a Gaussian function. The fact that $\widehat{g}(x)$ is the Fourier transform of $g$ is a linear condition over the entries of $X_2$, $X_3$, and $X_4$ in $\R^{d+1}$, which we also write in terms of the Laguerre basis $v(u)$. \smallskip

This is now a semidefinite program, for which we use the high-precision solver sdpa-gmp \cite{Nak}. In Table 1, we show the maxima of \eqref{eq:semi-j1} and \eqref{eq:semi-j2} for several values of $d$, compared with the bound from the triangle function $\widehat{g}(\alpha)=(1-|\alpha|)_+\in\mathcal{A}_1$. In our computations, we take $m=3$ in the definitions of $J_1$ and $J_2$, and $N=55$ in the partition of $[0, \, 1]$. Further experiments with other values of $m$ and $N$ did not significantly alter the results. For comparison, the authors use $d=40$ in \cite{CGL}.
\begin{table}
\begin{center}
\begin{tabular}{ |c||c|c|c|c|c| }
 \hline
 $d$ & $20$ & $40$ & $60$ & $70$ & $\widehat{g}(\alpha)=(1-|\alpha|)_+$\\
 \hline\hline
 $J_1(g)$ & $0.9211\ldots$ & $0.9236\ldots$ & $0.9245\ldots$ & $0.9248\ldots$ &  $0.9275\ldots$\\
 \hline
 $J_2(g)$ & $0.9748\ldots$ & $0.9774\ldots$ & $0.9784\ldots$ & $0.9788\ldots$ &  $0.9818\ldots$\\
 \hline \hline
\end{tabular}
		\vspace{0.2cm}
\caption{Semidefinite programming bounds for the approximations (with $m=3$ and $N=55$) of $J_1$ and $J_2$, for several parameters $d$, compared with a simple triangle bound. The constructed polynomials have degree $4d+2$. }
\end{center}
\end{table}
	\section*{Acknowledgements}
	The author thanks Emanuel Carneiro, Andr\'es Chirre, and Micah B. Milinovich for helpful discussion, comments, and suggestions. The author also thanks the anonymous referees for helpful comments. The author acknowledges support from CNPq - Brazil and from the STEP Programme of ICTP - Italy.

\medskip


\begin{thebibliography}{10}

\bibitem{Ach}
N.~I. Achieser.
\newblock {\em Theory of approximation}.
\newblock Frederick Ungar Publishing Co., New York, 1956.
\newblock Translated by Charles J. Hyman.

\bibitem{semi}
G.~Blekherman, P.~A. Parrilo, and R.~R. Thomas, editors.
\newblock {\em Semidefinite optimization and convex algebraic geometry},
  volume~13 of {\em MOS-SIAM Series on Optimization}.
\newblock Society for Industrial and Applied Mathematics (SIAM), Philadelphia,
  PA; Mathematical Optimization Society, Philadelphia, PA, 2013.

\bibitem{Br}
R.~P. Brent.
\newblock {\em Algorithms for minimization without derivatives}.
\newblock Prentice-Hall Series in Automatic Computation. Prentice-Hall, Inc.,
  Englewood Cliffs, N.J., 1973.

\bibitem{CCCM}
E.~Carneiro, V.~Chandee, A.~Chirre, and M.~B. Milinovich.
\newblock On {M}ontgomery's pair correlation conjecture: a tale of three
  integrals.
\newblock Preprint. Available as {\url{https://arxiv.org/abs/2108.09258}. To
  appear in \textit{J. Reine Angew. Math}.}

\bibitem{CCLM}
E.~Carneiro, V.~Chandee, F.~Littmann, and M.~B. Milinovich.
\newblock Hilbert spaces and the pair correlation of zeros of the {R}iemann
  zeta-function.
\newblock {\em J. Reine Angew. Math.}, 725:143--182, 2017.

\bibitem{CMS}
E.~Carneiro, M.~B. Milinovich, and K.~Soundararajan.
\newblock Fourier optimization and prime gaps.
\newblock {\em Comment. Math. Helv.}, 94(3):533--568, 2019.

\bibitem{CKL}
V.~Chandee, K.~Klinger-Logan, and X.~Li.
\newblock Pair correlation of zeros of {$\Gamma_1(q)$} {L}-functions.
\newblock Preprint.

\bibitem{CLLR}
V.~Chandee, Y.~Lee, S.-C. Liu, and M.~Radziwi\l\l.
\newblock Simple zeros of primitive {D}irichlet {$L$}-functions and the
  asymptotic large sieve.
\newblock {\em Q. J. Math.}, 65(1):63--87, 2014.

\bibitem{CGL}
A.~Chirre, F.~Gon\c{c}alves, and D.~de~Laat.
\newblock Pair correlation estimates for the zeros of the zeta function via
  semidefinite programming.
\newblock {\em Adv. Math.}, 361:106926, 22, 2020.

\bibitem{CPL}
A.~Chirre, V.~J. Pereira~J\'{u}nior, and D.~de~Laat.
\newblock Primes in arithmetic progressions and semidefinite programming.
\newblock {\em Math. Comp.}, 90(331):2235--2246, 2021.

\bibitem{CQ-H}
A.~Chirre and O.~E. Quesada-Herrera.
\newblock Fourier optimization and quadratic forms.
\newblock Preprint. Available as {\url{https://arxiv.org/abs/2012.07781}}. To
  appear in \textit{Q. J. Math.}

\bibitem{CE}
H.~Cohn and N.~Elkies.
\newblock New upper bounds on sphere packings. {I}.
\newblock {\em Ann. of Math. (2)}, 157(2):689--714, 2003.

\bibitem{Go}
D.~A. Goldston.
\newblock On the function {$S(T)$} in the theory of the {R}iemann
  zeta-function.
\newblock {\em J. Number Theory}, 27(2):149--177, 1987.

\bibitem{Go-pair}
D.~A. {Goldston}.
\newblock {On the pair correlation conjecture for zeros of the {R}iemann
  zeta-function}.
\newblock {\em {J. Reine Angew. Math.}}, 385:24--40, 1988.

\bibitem{Go-notes}
D.~A. {Goldston}.
\newblock {Notes on pair correlation of zeros and prime numbers}.
\newblock In {\em Recent perspectives in random matrix theory and number
  theory}, pages 79--110. Cambridge University Press, 2005.

\bibitem{GG}
D.~A. Goldston and S.~M. Gonek.
\newblock A note on the number of primes in short intervals.
\newblock {\em Proc. Amer. Math. Soc.}, 108(3):613--620, 1990.

\bibitem{GGOS}
D.~A. Goldston, S.~M. Gonek, A.~E. \"{O}zl\"{u}k, and C.~Snyder.
\newblock On the pair correlation of zeros of the {R}iemann zeta-function.
\newblock {\em Proc. London Math. Soc. (3)}, 80(1):31--49, 2000.

\bibitem{GoMo}
D.~A. Goldston and H.~L. Montgomery.
\newblock Pair correlation of zeros and primes in short intervals.
\newblock In {\em Analytic number theory and {D}iophantine problems
  ({S}tillwater, {OK}, 1984)}, volume~70 of {\em Progr. Math.}, pages 183--203.
  Birkh\"{a}user Boston, Boston, MA, 1987.

\bibitem{Mo}
H.~L. Montgomery.
\newblock The pair correlation of zeros of the zeta function.
\newblock {\em Proc. Symp. Pure Math.}, 24:181--193, 1973.

\bibitem{Nak}
M.~Nakata.
\newblock A numerical evaluation of highly accurate multiple-precision
  arithmetic version of semidefinite programming solver: {SDPA-GMP}, -{QD} and
  -{DD}.
\newblock In {\em 2010 IEEE International Symposium on Computer-Aided Control
  System Design (CACSD)}, pages 29--34, 2010.

\bibitem{Oz}
A.~E. \"{O}zl\"{u}k.
\newblock On the {$q$}-analogue of the pair correlation conjecture.
\newblock {\em J. Number Theory}, 59(2):319--351, 1996.

\bibitem{So}
K.~{Sono}.
\newblock {A note on simple zeros of primitive {D}irichlet \(L\)-functions}.
\newblock {\em {Bull. Aust. Math. Soc.}}, 93(1):19--30, 2016.

\end{thebibliography}
\end{document}